\documentclass[11pt]{article}
\usepackage{graphicx, color}
\usepackage{bm}
\usepackage{amsmath,latexsym,amsbsy,amssymb}
\def\ds{\displaystyle}
\def\forall{\hbox{for all}~}
\def\L{{\bf L}}
\def\H{{\cal H}}

\def\ve{\varepsilon}

\def\dint{\int\!\!\int}

\def\E{{\cal E}}
\def\I{{\cal I}}
\def\J{{\cal J}}
\def\S{{\cal S}}

\def\R{{\mathbb R}}

\def\P{{\cal P}}

\def\vs{\vskip 2em}

\def\v{\vskip 1em}

\def\begi{\begin{itemize}}
\def\endi{\end{itemize}}

\def\TP{{\cal IP}}

\def\Tilde{\widetilde}

\def\bega{\begin{array}}
\def\enda{\end{array}}
\def\meas{\hbox{meas}}

\def\bel{\begin{equation}\label}
\def\eeq{\end{equation}}
\def\sqr#1#2{\vbox{\hrule height .#2pt
\hbox{\vrule width .#2pt height #1pt \kern #1pt
\vrule width .#2pt}\hrule height .#2pt }}
\def\square{\sqr74}
\def\endproof{\hphantom{MM}\hfill\llap{$\square$}\goodbreak}

\newtheorem{theorem}{Theorem}[section]

\newtheorem{proposition}[theorem]{Proposition}

\newtheorem{definition}[theorem]{Definition}
\newtheorem{remark}[theorem]{Remark}
\newtheorem{example}[theorem]{Example}

\begin{document}
\title{\bf A Mollification Approach to Ramified Transport and Tree Shape Optimization}
\vs

\author{Alberto Bressan,  Giacomo Vecchiato and Ludmil Zikatanov\\
\, \\
Department of Mathematics, Penn State University, \\
University Park,
Pa.~16802, USA.\\
\,\\
E-mails: axb62@psu.edu,~giacvecchiato@gmail.com,~ludmil@psu.edu
}
\maketitle
\begin{abstract} The paper analyzes a mollification algorithm, for  the numerical computation of optimal irrigation patterns. This provides a regularization of the 
standard irrigation cost functional, in a Lagrangian framework.  Lower semicontinuity and Gamma-convergence results are proved.
The technique is then applied
to some numerical optimization problems, related to the optimal shape of tree roots and branches.
\end{abstract}

\section{Introduction}
\label{s:inro}
\setcounter{equation}{0}
Aim of this paper is to analyze a numerical algorithm for optimal ramified transport, with applications
to some shape optimization problems for tree roots and branches.

Problems of optimal transport with concave power law were first proposed by 
Gilbert \cite{G}, in a discrete framework modeling communication networks. 
Irrigation problems for general Radon measures were formulated in the seminal papers  
\cite{MMS, X03}, providing the framework for all subsequent mathematical literature.  See
\cite{BCM} for a comprehensive introduction and further references.

The recent paper \cite{BSun} introduced a class of geometric optimization problems, 
modeling the optimal shape of tree roots and branches.   The
functionals to be maximized include a payoff, modeling the amount of sunlight captured by 
the distribution of leaves, and the amount of water and nutrients absorbed by the roots, together
with a ramified irrigation cost.
The existence of optimal solutions, together with various 
qualitative properties, were studied in \cite{BGS, BPS, BSun, BSun2}.
However, 
due to the lack of continuity and convexity of the ramified transportation cost, the numerical computation of optimal 
irrigation patterns has remained a difficult problem.  

In this direction,  it is natural  to approximate the irrigation cost with more regular cost functionals. 
This allows to compute 
directions of steepest descent, and possibly reduces the number of local minima where a gradient descent
algorithm may get stuck.

In \cite{OS, S2} a new approach was proposed, where the singular 1-dimensional rectifiable vector measures describing the optimal flow are approximated by more regular vector fields, 
minimizing  a sequence of elliptic energies. 
These can be regarded as a regularization of the irrigation functional, in an Eulerian setting.

In the present paper we explore an alternative kind of regularization, in the Lagrangian setting.
Based on the Lagrangian formulation \cite{BCM, BS}, we consider two types of mollifications of the  multiplicity functional.  

More in detail: Section~\ref{sec:2} provides a brief review of optimal irrigation, introducing  the notation that will be used in the remainder of the paper.  
In Section~\ref{sec:3} we introduce a mollified multiplicity functional and study the corresponding mollified irrigation cost. The main result shows that 
this approximated cost is lower semicontinuous, hence it admits a global minimizer.  In Section~\ref{sec:4} we prove that, 
as  the mollification parameter $\ve\to 0+$, the mollified irrigation costs $\Gamma$-converge to the original irrigation cost, thus validating the numerical approach. 
In Section~\ref{sec:5} we consider an alternative mollification approach, where a maximum
is replaced by an integral average, thus having better regularity properties.
% This was originally motivated by the expectation that this additional regularity may improve the performance of gradient-descent algorithms.     
  In this direction, mixed results are obtained. A counterexample shows that in this case the mollified cost functionals are not lower semicontinuous, hence the existence of minimizers cannot be guaranteed.
On the other hand, as $\ve\to 0+$, we still have the $\Gamma$-convergence of the
the mollified irrigation costs to the original cost.
Section~\ref{sec:6} briefly reviews some shape optimization problems, related to tree roots and branches \cite{BSun}.     A simplified payoff functional, which still captures some key features of the problem, is introduced in (\ref{Hpen}).

The remaining two sections illustrate various numerical simulations, based on our mollification approach. 
Section~\ref{sec:7} is concerned with a discretized version of 
the optimal irrigation problem, for a measure uniformly distributed on a half-circumference.
It is here apparent that minimizers obtained with different mollification parameters converge to  the optimal irrigation pattern for the original problem.
Finally,  in Section~\ref{sec:8} we show the results of several numerical simulations,
related to a shape optimization problem for tree roots or tree branches.

For further properties of optimal irrigation patterns we refer to \cite{DS07, DS, MMS, MS, MoS, PSX, S1, X15}.
Optimization problems for tree roots and branches have also been studied in \cite{BG, V}.
\v
\section{Review of optimal irrigation patterns}
\label{sec:2}
\setcounter{equation}{0}
Let $\mu$ be a positive Radon measure on $\R^d$ with bounded support 
and total mass $M = \mu(\R^d)$. 
Set $\Theta = [0,M]$. We think each $\theta \in \Theta$ as a ``water particle" to be transported  from the origin to 
various locations in $\R^d$.
 Given $\alpha \in [0,1]$,  following the Lagrangian approach in \cite{MMS}, 
the $\alpha$-irrigation cost of $\mu$ can be defined as follows.

\begin{definition} {\bf (irrigation plan).} \label{d:21}
A measurable map 
\bel{iplan}
\chi:\Theta\times \R_+~\mapsto~ \R^d\eeq
is called an {\bf admissible irrigation plan} for the measure $\mu$
if 
\begi
\item[(i)] For each $\theta\in \Theta$, the map
$t\mapsto \chi(\theta,t)$ is 1-Lipschitz continuous and eventually constant.
More precisely, for each $\theta$ there exists a stopping time 
\bel{stopT} T(\theta)~\doteq~\inf~\bigl\{ t>0\,;~~\chi(\theta, t')=\chi(\theta,t)~~\forall~t'\geq t\bigr\}\eeq
such that the following holds.   Denoting by
$$\dot \chi(\theta,t)~=~{\partial\over\partial t} ~\chi(\theta,t)$$
the partial derivative w.r.t.~time, one has
\bel{stime}\left\{ \bega{rl} \bigl|\dot \chi(\theta,t)\bigr|~\leq~1\qquad &\quad\hbox{for a.e.}
~t\in [0, T(\theta)],\\[3mm]
\bigl|\dot \chi(\theta,t)\bigr|~=~0\qquad &\quad\hbox{for}
~t\geq T(\theta).\enda\right.\eeq
\item[(ii)] At time $t=0$ all particles are at the origin:
$\chi(\theta,0)={\bf 0}$ for all $\theta\in\Theta$.
\item[(iii)] The push-forward of the Lebesgue measure on $[0,M]$ through the map $\theta\mapsto 
\chi(\theta,T(\theta))$ coincides with the measure $\mu$.
In other words, for every open set $A\subset\R^d$ there holds
\bel{chi1}\mu(A)~=~\hbox{meas}\Big( \bigl\{ \theta\in \Theta\,;~~\chi(\theta,T(\theta))\in A\bigr\}\Big).\eeq
\endi
\end{definition}
%One can think of $\chi(\theta,t)$ as the 
%position of the water particle $\theta$ at time $t$.
We denote by $\TP(\mu)$ the family of all admissible irrigation plans for $\mu$.

Next, to define the corresponding transportation cost, one must 
take into account the fact that, if many paths go through the same pipe, their cost decreases.   With this in mind, 
given a point $x\in \R^d$ we first compute 
how many paths go through the point $x$.  
This is described by
\bel{chi}|x|_\chi~=~\meas\Big(\bigl\{\theta\in \Theta\,;~~\chi(\theta,t)= x~~~\hbox{for some}~~t\geq 0\bigr\}\Big).\eeq
We think of $|x|_\chi$ as the {\it total flux going through the
point $x$}.
\v
\begin{definition} {\bf (irrigation cost).} \label{d:22}
For a given $\alpha\in [0,1]$,
the total cost of the irrigation plan $\chi$ is
\bel{TCg}
\E^\alpha(\chi)~\doteq~\int_\Theta\left(\int_{\R_+} \bigl|\chi(\theta,t)
\bigr|_\chi^{\alpha-1} \cdot |\dot \chi(\theta,t)|\, dt\right)
d\theta.\eeq
The  {\bf $\alpha$-irrigation cost} of a measure $\mu$
is defined as 
\bel{Idef}\I^\alpha(\mu)~\doteq~\inf_\chi \E^\alpha(\chi),\eeq
where the infimum is taken over all admissible irrigation plans.
\end{definition}

\begin{remark}\label{r:par} {\rm The multiplicity function $|x|_\chi$ at (\ref{chi}) as well as the irrigation cost (\ref{Idef})
do not change under a re-parameterization of the paths $t\mapsto \chi(\theta,t)$.
In particular, every irrigation plan can be parameterized by arc length, so that for a.e.~$t\geq 0$ one has
\bel{alp}\bigl|\dot{\chi}(\theta,t)\bigr| ~=~\left\{\bega{rl} 1\qquad &\hbox{if}~~0<t<T(\theta),\\[1mm]
0\qquad &\hbox{if}~~t>T(\theta).\enda\right.\eeq
}\end{remark}

\begin{remark} {\rm In the case $\alpha=1$, the expression (\ref{TCg}) reduces to
$$
\E^\alpha(\chi)~=~\int_\Theta\left(\int_{\R_+} |\dot \chi_t(\theta,t)|\, dt\right)
d\theta~=~\int_\Theta\bigl[\hbox{total length of the path} ~\chi(\theta,\cdot)\bigr]\, d\theta\,.
$$
Of course, this length is minimal if every path $\chi(\cdot,\theta)$
is a straight line, joining the origin with $\chi(\theta, T(\theta))$.  Hence
\bel{OIP}
\I^\alpha(\mu)~\doteq~\inf_\chi \E^\alpha(\chi)~=~\int_\Theta |\chi(\theta, T(\theta))|\, d\theta~=~\int |x|\, d\mu\,.
\eeq

On the other hand, when $\alpha<1$, moving along a path which is traveled by few other particles
comes at a  high cost. Indeed, in this case the factor $\bigl|\chi(\theta,t)
\bigr|_\chi^{\alpha-1}$ becomes  large.   To reduce the total cost,  is thus convenient
that particles travel along the same path as far as possible.}
\end{remark}

Let $\gamma_0, \gamma_1 : \R_+ \to \R^d$ be two 1-Lipschitz continuous functions. We define the distance $d$ between $\gamma_0$ and $\gamma_1$ as:
$$
d(\gamma_0,\gamma_1)~\doteq~\sup_{k\ge1}~\frac{1}{k}\|\gamma_0 - \gamma_1\|_{\L^{\infty}([0,k])}\,.
$$
\begin{definition} \label{d:iplan_conv}
We say that a sequence of admissible irrigation plans $\{ \chi_n \}~\subset~\TP(\mu)$ converges to an admissible irrigation plan $\chi~\in~\TP(\mu)$, and write $\chi_n \to \chi$, if
$$
\lim_{n \to \infty} d(\chi_n(\theta),\chi(\theta)) = 0 \quad \text{for a.e. } \theta \in \Theta. 
$$
\end{definition}
\v
For the basic theory of ramified transport we refer to \cite{BS, MMS, X03, X15}, or to the monograph \cite{BCM}. 
\v

\section{A mollified Lagrangian approach}
\label{sec:3}
\setcounter{equation}{0}
For a given positive Radon measure $\mu$ on $\R^d$, determining  an optimal irrigation plan is not an easy task, because the functional to be minimized is neither continuous, nor convex.

In this section we propose a computational approach based on the Lagrangian representation,
using a suitable mollification of the multiplicity function.
Let $J:\R_+\mapsto \R_+$ be a non-increasing, Lipschitz function such that 
\bel{Jprop} J(0) = 1,\qquad\qquad \lim_{r\to+\infty} J(r)=0.
\eeq
In particular, for computational purposes one could take
\bel{Jex}
J(r)=e^{-r},\qquad\qquad J(r)= {1\over 1+r}\,,\qquad\hbox{or}\qquad  
J(r)~\doteq~\left\{ \bega{cl} 1- r\qquad &\hbox{if} ~~ 0\leq r\leq 1,\\[1mm]
0\qquad &\hbox{if} ~~r>1.\enda\right.\eeq
We then define the rescaled functions
\bel{Jep}J_\ve(r)\,\doteq\, J(r/\ve).\eeq

Next, let $\chi:\Theta\times \R_+\mapsto \R^d$ be an admissible irrigation plan
for a measure $\mu$, as in Definition~\ref{d:21}.
For any $x\in\R^d$ we then define a {\bf mollified multiplicity} function by setting
\bel{mmi}
%|x|_{\strut \chi_\ve^\infty}~
W_\ve(x,\chi)~\doteq~\int_\Theta \max_{t\geq 0}\,J_\ve\Big(\bigl| \chi(\theta,t)-x\bigr|
\Big)\, d\theta~=~\int_\Theta J_\ve\Big(\min_{t\geq 0}\,\bigl| \chi(\theta,t)-x\bigr|
\Big)\, d\theta.\eeq
%Moreover, assuming that 
%\bel{bigi}
%\int_0^{+\infty} J(r)\, dr~\geq~{1\over 2}\,,\eeq
%we also define
%\bel{mm1}
%%|x|_{\strut \chi_\ve^1}
%W_\ve^1(x,\chi)~\doteq~\int_\Theta\min\left\{ 1, \int_0^{+\infty} {2\over \ve} \, J_\ve\Big(\bigl| \chi(\theta,t)-x\bigr| \Big)\cdot |\dot \chi(\theta,t)|\
%\, dt \right\}d\theta.\eeq
%If there is no possibility of confusion, we write $W_\ve^\infty(x) = W_\ve(x,\chi)$ or $W_\ve^1(x) = W_\ve(x,\chi)$. 
 The corresponding mollified irrigation costs are then computed by replacing (\ref{TCg}) with
\bel{TCep}
\E^\alpha_\ve(\chi)~\doteq~\int_\Theta \left(\int_{\R_+} \bigl[ W_\ve(\chi(\theta,t))
\bigr]^{\alpha-1} \cdot |\dot \chi(\theta,t)|\, dt\right)
d\theta.\eeq

{}From the assumption $J(0)=1$ it immediately follows
\bel{i10}W_\ve(x)~\geq~\meas\Big(\bigl\{\theta\in \Theta\,;~~\chi(\theta,t)= x~~~\hbox{for some}~~t\geq 0\bigr\}\Big)~=~
|x|_\chi\,,\eeq
hence 
$$\E^\alpha_\ve(\chi)~\leq~\E^\alpha(\chi).$$
%Here $W_\ve(x)$ denotes either one of the mollified multiplicities in (\ref{mmi})-(\ref{mm1}).
We observe that neither this mollified multiplicity, nor the total cost of the transportation 
plan (\ref{TCep}), are affected by a re-parameterization of the paths $t\mapsto \chi(\theta,t)$.

In the remainder of this section we prove the existence of a global minimizer for the mollified
irrigation cost, using a direct approach. Toward this goal, we first prove a lower semicontinuity result,
similar to Proposition 3.24 in \cite{BCM}, in terms of mollified multiplicities. In the following, $T_n (\theta)$ and $T(\theta)$ denote the stopping times for the irrigation plans $\chi_n$ and $\chi$ respectively. 

\begin{proposition}\label{p:31} {\bf (lower semicontinuity).}
Let $(x_n)_{n\geq 1}$ be a sequence of points in $\R^d$ and let $( \chi_n)_{n\geq 1}$  a sequence of irrigation plans for a measure $\mu$, such that
\bel{um1}
x_n \to x, \quad \chi_n \to \chi \quad \text{as } n \to \infty,
\eeq 
for some $x \in \R^d$ and $\chi \in \TP(\mu)$. In addition, assume that the corresponding stopping times satisfy 
\bel{um2}
\int_{\Theta} T_n(\theta) \, d\theta ~\le~ C
\eeq
for some constant $C > 0$ and for all $n \ge 1$. Then the mollified multiplicities \eqref{mmi} satisfy 
\bel{um3}
\limsup_{n \to \infty} W_\ve(x_n,\chi_n)~ \le~ W_\ve(x,\chi).
\eeq
\end{proposition}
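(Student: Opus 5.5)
The plan is to prove (\ref{um3}) pointwise in $\theta$ and then integrate, using reverse Fatou's lemma. The integrand $J_\ve(\cdot)$ is bounded by $1$ and $\Theta$ has finite measure, so
$$\limsup_{n\to\infty} W_\ve(x_n,\chi_n)~\le~\int_\Theta \limsup_{n\to\infty} J_\ve\Big(\min_{t\ge0}|\chi_n(\theta,t)-x_n|\Big)\,d\theta.$$
Since $J_\ve$ is non-increasing and continuous, it will then suffice to show that for a.e.~$\theta$
\bel{liminfd}
\liminf_{n\to\infty}\,\min_{t\ge0}|\chi_n(\theta,t)-x_n|~\ge~\min_{t\ge0}|\chi(\theta,t)-x|.
\eeq
Indeed, this gives $\limsup_n J_\ve(\min_t|\chi_n-x_n|)\le J_\ve(\min_t|\chi-x|)$, and integrating yields (\ref{um3}).

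To prove (\ref{liminfd}) I would use the bound (\ref{um2}) to keep the minimizing times bounded. By Fatou's lemma, $\int_\Theta \liminf_n T_n(\theta)\,d\theta\le C$, so for a.e.~$\theta$ we have $\liminf_n T_n(\theta)<\infty$. Fix such a $\theta$ at which also $d(\chi_n(\theta),\chi(\theta))\to0$. Pass to a subsequence realizing the $\liminf$ of the distances $\min_t|\chi_n(\theta,t)-x_n|$. The delicate point is that along this subsequence $T_n(\theta)$ need not stay bounded, because the subsequence realizing the liminf of the distances may differ from the one realizing the liminf of $T_n$.

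I expect this to be the main obstacle, and I would handle it as follows. Since each $\chi_n(\theta,\cdot)$ is constant after $T_n(\theta)$, the minimum of $t\mapsto|\chi_n(\theta,t)-x_n|$ is attained at some $t_n\le T_n(\theta)$. If the $t_n$ stay bounded along a further subsequence, say $t_n\to \bar t$, then the uniform convergence $\chi_n(\theta,\cdot)\to\chi(\theta,\cdot)$ on $[0,k]$ (implied by the metric $d$) together with the equi-Lipschitz property gives $\chi_n(\theta,t_n)\to\chi(\theta,\bar t)$. Hence
$$\lim_n |\chi_n(\theta,t_n)-x_n|~=~|\chi(\theta,\bar t)-x|~\ge~\min_t|\chi(\theta,t)-x|.$$
If instead $t_n\to\infty$, I would argue differently. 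Since $\chi(\theta,\cdot)$ is constant for $t\ge T(\theta)$, the convergence in $d$ only controls $\sup_{[0,k]}|\chi_n-\chi|\le k\,d(\chi_n,\chi)$, which grows with $k$, so bounded times are not automatic. To avoid this case, I would use the integral bound more strongly. Set $f_n(\theta)=J_\ve(\min_t|\chi_n(\theta,t)-x_n|)$ and $g_n(\theta)=J_\ve\big(\min_{t\le K}|\chi_n(\theta,t)-x_n|\big)$. These agree whenever $T_n(\theta)\le K$, so $\int|f_n-g_n|\le \meas\{T_n>K\}\le C/K$ by Chebyshev's inequality. For $g_n$ the compact-time argument above applies directly, giving $\limsup_n g_n\le J_\ve(\min_t|\chi(\theta,t)-x|)$ for a.e.~$\theta$.

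Combining the steps: $\limsup_n\int f_n\le \limsup_n\int g_n + C/K \le W_\ve(x,\chi)+C/K$, where the second inequality uses reverse Fatou on the $g_n$. Letting $K\to\infty$ concludes the proof. This truncation route also makes the per-$\theta$ subsequence issue disappear entirely, so it is the version I would actually write down.
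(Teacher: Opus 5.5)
Your truncation argument is correct and is essentially the paper's proof: the Chebyshev bound $\meas\{T_n>K\}\le C/K$ from (\ref{um2}), uniform convergence of $\chi_n(\theta,\cdot)$ on the compact window $[0,K]$, and reverse Fatou's lemma, followed by $K\to\infty$. The only difference is that you truncate the time window ($\min_{t\le K}$) rather than restricting to $\theta\in\Theta_n^c(\lambda)$, which gives genuine pointwise convergence of $g_n$ and avoids the lemma $\liminf_n T_n\ge T$ that the paper invokes. You were also right to drop the pointwise inequality for $\liminf_n\min_t|\chi_n(\theta,t)-x_n|$: paths may detour through $x$ after time $n$ on sets of measure $O(1/n)$ that cover every $\theta$ infinitely often, so it can fail everywhere.
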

{\bf Proof.} {\bf 1.} For any $\lambda > 0$, consider the sets
$$
\Theta_n \,\doteq\, \bigl\{\theta \in \Theta \, ; \, T_n(\theta) > \lambda \bigr\}, \quad\qquad \Theta(\lambda)
\, \doteq\, \bigl\{\theta \in \Theta \, ; \, T(\theta) > \lambda \bigr\}. 
$$
By \eqref{um2} it follows
$$
\meas\Big( \Theta_n(\lambda) \Big) \,\le\, \frac{C}{\lambda}.
$$
Therefore, for any given $\delta > 0$, choosing $\lambda > C / \delta$ one obtains
\bel{pum1}
\meas\Big( \Theta_n(\lambda) \Big)\, \le\, \delta
\eeq
for every $n \ge 1$. For a.e.~$\theta \in \Theta$ one has (see Lemma 3.20 in \cite{BCM})
\bel{liT}
\liminf_{n \to \infty} T_n(\theta)\, \ge \,T(\theta).
\eeq
Hence (up to a set of measure zero)
the complementary sets $\Theta_n^c (\lambda) = \Theta \setminus \Theta_n(\lambda)$ and $ \Theta^c(\lambda) = \Theta \setminus \Theta(\lambda) $ satisfy 
\bel{pum2}
\bigcap_{k\geq 1} \bigcup_{n \ge k} \Theta_n^c(\lambda) ~\subseteq~ \Theta^c(\lambda). 
\eeq

{\bf 2.} Given $\theta \in \Theta$, we now consider 
$$
\limsup_{n \to \infty} ~J_{\ve}\left( \min_{t \ge 0} | \chi_n(\theta,t) - x_n | \right) {\bf 1}_{\Theta_n^c(\lambda)} (\theta). 
$$

Here and in the sequel, ${\bf 1}_S$ denotes the characteristic function of a set $S$. If this limsup is strictly
positive, then there exists a subsequence of indices $n_k\to\infty$ and times $t_{n_k} \in [0,\lambda]$ such that 
\begin{itemize}
\item $\ds \theta \in \Theta_{n_k}^c(\lambda)$ for every $n_k$,
\item $\ds \min_{t \ge 0} | \chi_{n_k}(\theta,t) - x_{n_k} | = |\chi_{n_k} (\theta,t_{n_k}) - x_{n_k}|$,
\item $\ds \limsup_{n\to\infty} J_{\ve}\left( \min_{t\ge0} |\chi_n(\theta,t) - x_n| \right) {\bf 1}_{\Theta_n^c(\lambda)}(\theta) ~=~ \lim_{k\to\infty} J_{\ve}\Big(\bigl|\chi_{n_k}(\theta,t_{n_k}) - x_{n_k} \bigr|\Big) $.
\end{itemize}
By \eqref{pum2} it follows that $\theta \in \Theta^c(\lambda)$. Moreover, since $t_{n_k} \in [0,\lambda]$ for every $n_k$, by possibly selecting a further subsequence we obtain the convergence $t_{n_k} \to\bar{t} \in [0,\lambda]$. Since $d(\chi_n(\theta),\chi(\theta)) \to 0$, this implies the convergence $\chi_{n_k}(\theta) \to \chi(\theta)$ uniformly on $[0,\lambda]$. Therefore
$$
\chi_{n_k}(\theta,t_{n_k}) \to \chi(\theta,\bar{t}).
$$
Consequently
$$
\bigl| \chi_{n_k}(\theta,t_{n_k})  - x_{n_k}\bigr|\, \to\, \bigl| \chi(\theta,\bar{t}) - x\bigr|
\, \ge\, \min_{t \ge 0} \bigl| \chi(\theta,t) - x \bigr|. 
$$
Since $J_{\ve}$ is decreasing, this implies
$$
\lim_{k \to \infty} J_{\ve} \Big(\bigl| \chi_{n_k}(\theta,t_{n_k}) - x_{n_k}  \bigr|\Big)\, \le\, J_{\ve} \left( \min_{t \ge 0} | \chi(\theta,t) - x | \right). 
$$
This proves that 
\bel{pum3}
\limsup_{n\to\infty}\, J_{\ve}\left( \min_{t\ge0} |\chi_n(\theta,t) - x_n| \right) {\bf 1}_{\Theta_n^c(\lambda)}(\theta)
~ \le~ J_\ve\left( \min_{t \ge 0} | \chi(\theta,t) - x | \right) {\bf 1}_{\Theta^c(\lambda)}.
\eeq

{\bf 3.} By \eqref{pum1} and \eqref{pum3}, recalling that $J_{\ve} \le 1$ and using Fatou's lemma, we obtain
$$
\bega{rl}
\ds \limsup_{n\to\infty}  W_\ve(x,\chi_n) \!\! &=~ \ds \limsup_{n\to\infty}  \int_{\Theta} J_{\ve}\left(  \min_{t \ge 0} \bigl|\chi_n(\theta,t) - x_n
\bigr| \right) \, d\theta  \\[4mm]
&\le ~\ds \limsup_{n\to\infty}  \int_{\Theta_n^c(\lambda)} J_{\ve}\left(  \min_{t \ge 0} \bigl|\chi_n(\theta,t) - x_n\bigr| \right) \, d\theta +\delta \\[4mm]
&\le~\ds \int_{\Theta} \limsup_{n\to\infty}  \left\{ J_{\ve}\left(  \min_{t \ge 0} \bigl|\chi_n(\theta,t) - x_n\bigr| \right)  {\bf 1}_{\Theta_n^c(\lambda)}(\theta) \right\} d\theta + \delta \\[4mm]
&\le~\ds \int_{\Theta}  J_{\ve}\left(  \min_{t \ge 0} \bigl| \chi(\theta,t)  - x\bigr| \right) {\bf 1}_{\Theta^c(\lambda)}(\theta) \, d\theta + \delta \\[4mm]
&\ds\le W_\ve(x,\chi) + \delta .
\enda
$$
Since $\delta > 0$ is arbitrary, this concludes the proof. 
\endproof

The next result yields the lower semicontinuity of the mollified irrigation cost. 
It provides an analog of Proposition~3.40 in \cite{BCM}.

\begin{proposition}\label{p:32}
Let $\{ \chi_n \} \subset \TP(\mu)$ be a sequence of admissible irrigation plans, 
all parameterized by arc-length as in (\ref{alp}), whose stopping times $T_n(\theta)$ satisfy the bound \eqref{um2}. Furthermore, assume there exists an 
admissible irrigation plan $\chi \in \TP(\mu)$ such that $\chi_n \to \chi$. Then
$$
\liminf_{n \to \infty } \,\E^{\alpha}_{\ve}(\chi_n) ~\ge~ \E^{\alpha}_{\ve} (\chi).
$$ 
\end{proposition}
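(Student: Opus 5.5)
The plan is to mimic Proposition 3.40 in \cite{BCM}. We combine the upper semicontinuity of mollified multiplicities (Proposition~\ref{p:31}) with the lower semicontinuity of length along converging paths, through Fatou's lemma. Since $\alpha-1\le 0$, the map $s\mapsto s^{\alpha-1}$ is non-increasing and continuous on $(0,\infty)$, with value $+\infty$ at $s=0$. Hence (\ref{um3}) gives, for every $x_n\to x$,
$$\liminf_{n\to\infty} \bigl[W_\ve(x_n,\chi_n)\bigr]^{\alpha-1}~\ge~\bigl[W_\ve(x,\chi)\bigr]^{\alpha-1}.$$
Write $g_n(x)=[W_\ve(x,\chi_n)]^{\alpha-1}$ and $g(x)=[W_\ve(x,\chi)]^{\alpha-1}$. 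Since the $\chi_n$ are parameterized by arc length,
$$\E^\alpha_\ve(\chi_n)~=~\int_\Theta\int_0^{T_n(\theta)} g_n(\chi_n(\theta,t))\,dt\,d\theta.$$

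By Fatou's lemma in $\theta$ (the integrands are nonnegative), it suffices to prove for a.e.~$\theta$ that
$$\liminf_{n\to\infty}\int_0^{T_n(\theta)} g_n(\chi_n(\theta,t))\,dt~\ge~\int_0^{\infty} g(\chi(\theta,t))\,|\dot\chi(\theta,t)|\,dt.$$
Fix such a $\theta$, with $\chi_n(\theta,\cdot)\to\chi(\theta,\cdot)$ uniformly on compacts and $\liminf T_n(\theta)\ge T(\theta)$. Pass to a subsequence realizing the liminf. The paths $\gamma_n=\chi_n(\theta,\cdot)$ are 1-Lipschitz with $|\dot\gamma_n|=1$ on $[0,T_n]$.

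The key step is a lower semicontinuity statement for weighted length, $\int g_n(\gamma_n)\,|\dot\gamma_n|\,dt$, when $\gamma_n\to\gamma$ uniformly and $g_n$ satisfies the $\Gamma$-liminf-type inequality above. I would proceed in three stages.

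\emph{First}, truncate. Replace $g_n,g$ by $\min\{g_n,K\}$ and $\min\{g,K\}$, and restrict to $t\in[0,\tau]$ with $\tau<T(\theta)$ finite. The final estimate then follows by letting $K\to\infty$ and $\tau\to T(\theta)$ with monotone convergence.

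\emph{Second}, transfer the pointwise liminf to a uniform one. The function $h(x)=\inf\{\liminf_n g_n(x_n): x_n\to x\}$ is lower semicontinuous and satisfies $h\ge g$. Hence, for any continuous $\phi\le \min\{h,K\}$, we get $g_n(\gamma_n(t))\ge \phi(\gamma_n(t))-\eta$ for all $n$ large and all $t\in[0,\tau]$. This uses uniform convergence of $\gamma_n$ and a compactness argument on the compact set $\gamma([0,\tau])$, arguing by contradiction with a sequence $t_n$.

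\emph{Third}, use lower semicontinuity of the length. For continuous $\phi\ge0$, the functional $\gamma\mapsto\int_0^\tau \phi(\gamma)|\dot\gamma|\,dt$ is lower semicontinuous under uniform convergence, since it is a supremum over partitions of sums $\sum \min_{[t_{i},t_{i+1}]}\phi(\gamma)\,|\gamma(t_{i+1})-\gamma(t_i)|$. Approximating the lower semicontinuous function $\min\{g,K\}$ from below by an increasing sequence of continuous $\phi$ and using monotone convergence then yields the claim.

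I expect the second stage to be the main obstacle. Proposition~\ref{p:31} gives only pointwise $\limsup$ information along converging sequences. It must be converted into a uniform-in-$t$ lower bound, which needs the compactness argument, together with care when $W_\ve(x,\chi)=0$ where $g=+\infty$; the truncation at level $K$ handles that case. The hypothesis (\ref{um2}) is used only to invoke Proposition~\ref{p:31}.
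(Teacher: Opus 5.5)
Your argument is correct, but it is considerably more elaborate than the paper's. The paper never needs lower semicontinuity of weighted length. Since each $\chi_n$ is parameterized by arc length, $|\dot\chi_n(\theta,t)|=\mathbf{1}_{[0,T_n(\theta)]}(t)$ for a.e.\ $t$, so the integrand of $\E^\alpha_\ve(\chi_n)$ contains no derivative of $\chi_n$. For a.e.\ $\theta$ and every $t<T(\theta)$, apply Proposition~\ref{p:31} with $x_n=\chi_n(\theta,t)\to\chi(\theta,t)$ and use $\liminf_n T_n(\theta)\geq T(\theta)$. This gives the pointwise bound
$$\liminf_{n\to\infty}\,\bigl[W_\ve(\chi_n(\theta,t),\chi_n)\bigr]^{\alpha-1}\mathbf{1}_{[0,T_n(\theta)]}(t)~\geq~\bigl[W_\ve(\chi(\theta,t),\chi)\bigr]^{\alpha-1}\mathbf{1}_{[0,T(\theta)]}(t).$$
A single application of Fatou's lemma on $\Theta\times\R_+$, followed by $\mathbf{1}_{[0,T(\theta)]}\geq|\dot\chi(\theta,\cdot)|$, concludes. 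You already had everything needed once you observed $|\dot\gamma_n|=1$ on $[0,T_n]$: Fatou in $t$ applies directly. So the truncation at level $K$, the uniform lower bound via the envelope $h$ and compactness (the step you flagged as the main obstacle), and the approximation by continuous $\phi$ can all be dropped. What your route buys is independence from the parameterization of the $\chi_n$. Because $\int\phi(\gamma)|\dot\gamma|\,dt$ is invariant under reparameterization and lower semicontinuous under uniform convergence, your proof still works when the $\chi_n$ are not arc-length parameterized, as long as \eqref{um2} holds. In that case the pointwise argument fails, because $\liminf_n|\dot\chi_n(\theta,t)|$ may be strictly smaller than $|\dot\chi(\theta,t)|$ on a set of positive measure. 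Two small remarks on your Stage~2. First, the contradiction argument only produces a subsequence $n_k$ with $\gamma_{n_k}(t_k)\to\bar x$; before comparing with $h$, either apply Proposition~\ref{p:31} directly to $(\chi_{n_k})$, or fill in the missing indices (which can only lower the $\liminf$). Second, $g$ is actually continuous, since $x\mapsto J_\ve\bigl(\min_t|\chi(\theta,t)-x|\bigr)$ is continuous and bounded by $1$ for each $\theta$; hence approximating $\min\{g,K\}$ from below by continuous functions is immediate.
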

{\bf Proof.}
By Proposition \ref{p:31} and since the stopping times are lower semicontinuous, we have 
$$
\liminf_{n\to\infty}  \Big\{ \bigl[ W_\ve(\chi_n(\theta,t), \chi_n) \bigr]^{\alpha - 1}{\bf 1}_{[0,T_n(\theta)]}(t) \Big\}~ \ge
~ \bigl[ W_\ve(\chi(\theta,t),\chi) \bigr]^{\alpha - 1} {\bf 1}_{[0,T(\theta)]}(t)
$$
for a.e.~$\theta \in \Theta$ and all $t \in \R_+$. Therefore, by Fatou's lemma,
$$
\bega{rl}
\ds \liminf_{n\to\infty} ~ \E_{\ve}^{\alpha}(\chi_n) &\ds= ~\liminf_{n\to\infty} \int_{\Theta} \int_{\R_+} \bigl[ W_\ve(\chi_n(\theta,t),\chi_n) \bigr]^{\alpha - 1} \bigl|\dot{\chi}_n(\theta,t)\bigr| \, dt d\theta \\[4mm]
&\ds=~ \liminf_{n\to\infty}  \int_{\Theta} \int_{\R_+}  \bigl[ W_\ve(\chi_n(\theta,t),\chi_n) \bigr]^{\alpha - 1}{\bf 1}_{[0,T_n(\theta)]}(t) \, dt d\theta \\[4mm]
&\ds \ge~ \int_{\Theta} \int_{\R_+} \bigl[ W_\ve(\chi(\theta,t),\chi) \bigr]^{\alpha - 1} {\bf 1}_{[0,T(\theta)]}(t) \, dt d\theta \\[4mm]
&\ds \ge ~\int_{\Theta} \int_{\R_+} \bigl[ W_\ve(\chi(\theta,t),\chi) \bigr]^{\alpha - 1} |\dot{\chi}(\theta,t)| \, dt d\theta \\[4mm]
&\ds= ~\E_{\ve}^{\alpha}(\chi),
\enda
$$
because  $|\dot{\chi}(\theta,t)| \le 1$. This concludes the proof. 
\endproof

Assuming that the Radon measure $\mu$ admits an irrigation plan, 
thanks to the lower semicontinuity of the mollified cost, we can now prove the existence of a minimizer.

\begin{theorem}\label{t:existence}
Assume that $\TP(\mu) \ne \emptyset$. Then there exists an admissible irrigation plan $\chi$
that minimizes the mollified cost $\E^{\alpha}_\ve(\chi)$. 
\end{theorem}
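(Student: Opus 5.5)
We argue by the direct method. Set $m\doteq \inf_{\chi\in\TP(\mu)} \E^\alpha_\ve(\chi)$. Since $\TP(\mu)\neq\emptyset$, we first check that $m<+\infty$. Indeed $W_\ve(x,\chi)\leq M$ for every $x$, and along the paths of $\chi$ one has $W_\ve(\chi(\theta,t),\chi)\geq |\chi(\theta,t)|_\chi$. A cleaner finite competitor is obtained differently: fix $R$ with $\hbox{supp}\,\mu\subset B(0,R)$. Along any path of any plan whose trajectories stay in $B(0,R')$, the particle paths are continuous and pass through the origin. Hence $W_\ve$ is bounded below there by a constant $c(R',\ve)>0$ depending on $J$ (if $J$ has compact support one uses instead the straight-line plan $\chi(\theta,t)=\min\{t,|y(\theta)|\}\,y(\theta)/|y(\theta)|$ and observes that near every point of each ray the nearby rays contribute positive measure). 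For the rays plan we then get $\E^\alpha_\ve\le c^{\alpha-1}\int|x|\,d\mu<\infty$. Note $\alpha-1\le 0$, so $[W_\ve]^{\alpha-1}\geq M^{\alpha-1}$ everywhere.

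Take a minimizing sequence $\chi_n$, reparameterized by arc length as in (\ref{alp}); by Remark~\ref{r:par} and the invariance noted after (\ref{TCep}) this does not change the costs. Then
$$\E^\alpha_\ve(\chi_n)\;\geq\; M^{\alpha-1}\int_\Theta T_n(\theta)\,d\theta ,$$
so the bound (\ref{um2}) holds uniformly in $n$, with $C=M^{1-\alpha}(m+1)$. This is exactly where the lower bound $W_\ve\le M$ is used.

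Next we need compactness: a subsequence $\chi_n\to\chi$ in the sense of Definition~\ref{d:iplan_conv}, with $\chi\in\TP(\mu)$. We would invoke the standard compactness theorem for irrigation plans with equibounded average stopping time (the analogue of the compactness result in \cite{BCM}, obtained via a Kolmogorov/Skorokhod-type construction). One regards $\chi_n$ as a probability law on the space of 1-Lipschitz paths with metric $d$. That space is compact by Ascoli on each $[0,k]$. One extracts a weakly convergent subsequence of laws, then re-parameterizes $\Theta$ so that the convergence holds pointwise a.e. Since $\mu$ and the plans only enter through the law of the paths, the costs are unchanged. Along the way we must verify that the limit satisfies (iii): the bound (\ref{um2}) gives tightness of the stopping times. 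By (\ref{liT}) and Fatou, $\int T\,d\theta\le C$, so a.e. path stops, and the endpoints $\chi_n(\theta,T_n(\theta))$ converge to $\chi(\theta,T(\theta))$ outside a set of small measure. Hence the push-forward is $\mu$. This compactness step, and ensuring that the relabeling of $\Theta$ is compatible with the costs, is the main technical obstacle; we would follow \cite{BCM} closely here.

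Finally, Proposition~\ref{p:32} applies to the arc-length parameterized sequence. It gives $\E^\alpha_\ve(\chi)\le\liminf_n\E^\alpha_\ve(\chi_n)=m$, hence $\chi$ is a minimizer.
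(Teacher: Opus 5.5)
Your proof is correct and follows essentially the same route as the paper. Both arguments take a minimizing sequence reparameterized by arc length, derive $\E^\alpha_\ve(\chi_n)\ge M^{\alpha-1}\int_\Theta T_n\,d\theta$ from the bound $W_\ve\le M$ (an upper bound on $W_\ve$, not a lower bound as you wrote), extract a limit plan by Skorokhod-type compactness, and conclude with Proposition~\ref{p:32}. Your preliminary finiteness check is unnecessary, since if the infimum were $+\infty$ every admissible plan would trivially be a minimizer; your check that the limit plan still pushes forward to $\mu$ usefully spells out a step the paper leaves to the compactness citation.
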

{\bf Proof. } By assumption, there exists a minimizing sequence of irrigation plans
$\chi_n \in\TP(\mu)$, ~$n\geq 1$. Without loss of generality, we can assume that each $\chi_n$ is parameterized by arc length as in (\ref{alp}). Since the sequence is minimizing, there exists a constant $C > 0$ such that
$$
\sup_{n \ge 1} \E^{\alpha}_\ve(\chi_n) ~\le~ C.
$$ 
Therefore,
$$
C ~\ge~ \sup_{n \ge 1} \int_{\Theta} \int_0^{T_{n}(\theta)} \left[ W_\ve(\chi_n(\theta,t), \chi_n) \right]^{\alpha - 1} |\dot{\chi}_n(\theta,t)| \, dt d\theta ~\ge~ \sup_{n \ge 1} \int_{\Theta} M^{\alpha - 1} T_n(\theta) \, d\theta,
$$
showing that the stopping times $T_n(\theta)$ satisfy a uniform bound of the form \eqref{um2}. 

By  Skorokhod's Theorem (see Theorem A.3 in \cite{BCM}) and the weak compactness of finite measures on compact metric spaces, there exists $\chi \in \TP(\mu)$ such that (by possibly taking a subsequence and relabeling) $\chi_n \to \chi$. By Proposition \ref{p:32}, we conclude that $\chi$ is a minimizer of the mollified irrigation cost $\E^{\alpha}_\ve$. 
\endproof

\section{$\Gamma$-limit of the mollified irrigation costs}
\label{sec:4}
\setcounter{equation}{0}
%\subsection{Convergence of minimizers}\label{s:CM}
The mollified irrigation costs $\E_\ve^\alpha$ are of interest insofar as they yield an effective tool to approximate 
the original cost $\E^\alpha$. In this direction, a natural framework is provided by
 $\Gamma$-convergence.  Thanks to the boundedness of the total  mass $\mu(\R^d)=M$, 
 by Skorokhod's theorem and the weak compactness of finite measures on compact metric spaces, 
 the $\Gamma$-convergence of the family of functionals $\E^{\alpha}_\ve$ to $\E^{\alpha}$ 
 is equivalent to the following conditions. 

For any sequence $( \ve_n)_{n\geq 1}$ such that $\ve_n \to 0$, one has:
\begin{itemize}

\item[${\bf (\Gamma 1)}$] For any sequence  of irrigation plans $\chi_n\in \TP(\mu)$, with $\chi_n \to\chi \in \TP(\mu)$ as $n\to\infty$, there holds
\bel{G1}
\liminf_{n \to \infty} \,\E^{\alpha}_{\ve_n} (\chi_n) ~\ge~ \E^{\alpha}(\chi).
\eeq
\item[${\bf (\Gamma 2)}$] For every $\chi \in \TP(\mu)$, there exists a sequence of irrigation plans $\chi_n 
\in \TP(\mu)$ such that $\chi_n \to \chi$ as $n\to\infty$ and 
$$
\limsup_{n \to \infty}\, \E^{\alpha}_{\ve_n}(\chi_n) \le \E^{\alpha}(\chi).  
$$
\end{itemize}

Toward a proof of $\Gamma$-convergence, we start by proving a simple relation between the mollified multiplicity $\eqref{mmi}$ and the original one. 

\begin{proposition}\label{p:41}
Let $\chi \in \TP(\mu)$ be an admissible irrigation plan and consider any point $x \in \R^d\setminus\{0\}$. Then there holds
\bel{i11}
W_\ve(x) \,\ge\, |x|_{\chi} 
\eeq
for all $\ve > 0$, and
\bel{i12}
\lim_{\ve \to 0} \,W_\ve(x) \,= \,|x|_{\chi}.
\eeq
Moreover, the mollified irrigation costs satisfy
\bel{i13}
\E^{\alpha}_{\ve}(\chi) \,\le\, \E^{\alpha}(\chi) \qquad \text{ and } \qquad
\lim_{\ve \to 0 }\, \E^{\alpha}_{\ve}(\chi) \,=\, \E^{\alpha}(\chi).  
\eeq
\end{proposition}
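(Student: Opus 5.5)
The inequality (\ref{i11}) is immediate from $J(0)=1$, $J\ge 0$: for every $\theta$ in the set $\{\theta\,;\,\chi(\theta,t)=x$ for some $t\}$ the integrand in (\ref{mmi}) equals $J_\ve(0)=1$, and it is nonnegative elsewhere. This is exactly (\ref{i10}).

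For (\ref{i12}), I will use monotonicity and dominated convergence in $\theta$. Since $J$ is non-increasing, for each fixed $\theta$ the quantity $J_\ve(r)=J(r/\ve)$ is non-decreasing in $\ve$. Set $r(\theta)\doteq\inf_{t\ge0}|\chi(\theta,t)-x|$; this infimum is attained because the path is continuous and eventually constant. If $r(\theta)=0$, then $\chi(\theta,t)=x$ for some $t$ and the integrand equals $1$. If $r(\theta)>0$, then $J(r(\theta)/\ve)\to 0$ as $\ve\to0$ by (\ref{Jprop}). The integrands are bounded by $1$ on $\Theta=[0,M]$, so dominated convergence gives $W_\ve(x)\to\meas\{\theta;\,r(\theta)=0\}=|x|_\chi$. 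Measurability of $\theta\mapsto r(\theta)$ follows from $\chi$ being measurable and continuous in $t$: take the infimum over rational $t$. The restriction $x\neq 0$ is not really needed here; at $x=0$ both sides equal $M$.

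For (\ref{i13}), the inequality follows from (\ref{i11}) because $\alpha-1\le 0$, so $W_\ve^{\alpha-1}\le |x|_\chi^{\alpha-1}$ pointwise along each path. For the limit, first assume $\E^\alpha(\chi)<\infty$ and apply dominated convergence on $\Theta\times\R_+$. The integrand $W_\ve(\chi(\theta,t))^{\alpha-1}|\dot\chi(\theta,t)|$ is dominated by $|\chi(\theta,t)|_\chi^{\alpha-1}|\dot\chi(\theta,t)|$, which is integrable. It converges pointwise to that same function wherever $\chi(\theta,t)\neq0$, by (\ref{i12}). The main point to check is the exceptional set where $\chi(\theta,t)=0$. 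There $|0|_\chi=M$, and indeed $W_\ve(0)=M$ for every $\ve$, since every path starts at the origin. So the convergence holds everywhere and the limit is $\E^\alpha(\chi)$.

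If instead $\E^\alpha(\chi)=+\infty$, I will use Fatou's lemma. The pointwise convergence still holds, including the case where the limit is $+\infty$ on points with $|x|_\chi=0$: there $W_\ve\to0$, so $W_\ve^{\alpha-1}\to+\infty$ when $\alpha<1$. Fatou then gives $\liminf_{\ve\to0}\E^\alpha_\ve(\chi)\ge\E^\alpha(\chi)=+\infty$. The case $\alpha=1$ is trivial, since then both costs equal the total path length. Alternatively, the monotone convergence theorem applies directly: $W_\ve$ decreases as $\ve\downarrow0$, so $W_\ve^{\alpha-1}$ increases, and this handles both the finite and infinite cases at once. I would use that argument as the cleanest route. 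The only delicate issue is the convention $0^{\alpha-1}=+\infty$, which monotone convergence accommodates.
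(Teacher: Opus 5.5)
Your proposal is correct and follows the paper's argument. You get (\ref{i11}) from $J(0)=1$, and (\ref{i12}) from the pointwise limit of $J(\rho(\theta,x)/\ve)$ together with dominated convergence in $\theta$. For (\ref{i13}) the paper only says it is a straightforward consequence of (\ref{i11})--(\ref{i12}). Your argument supplies those omitted details: monotone convergence (using that $W_\ve$ decreases as $\ve\downarrow 0$, which also covers $\E^\alpha(\chi)=+\infty$), plus the check that $W_\ve(0)=M=|0|_\chi$ where $\chi(\theta,t)=0$.
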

{\bf Proof.} {\bf 1.} The inequality \eqref{i11} was already proved at (\ref{i10}).

To prove (\ref{i12}), consider any $x\in\R^d$. For any $\theta \in \Theta$, define
$$
\rho(\theta,x)\, \doteq\, \min_{t \ge 0}\, \bigl|\chi(\theta,t) - x\bigr|.
$$
By (\ref{Jep}) this yields
$$
%|x|_{\chi} \le
 W_\ve(x) ~=~\int_\Theta J_\ve\bigl(\rho(\theta,x)\bigr)\, d\theta~=~\int_\Theta J
\left({\rho(\theta,x)\over\ve}\right) d\theta.
$$
%\le \meas( \{  \theta \, : \, \rho(\theta) < \ve \} ). 
%$$
Taking the limit as $\ve\to 0$, by (\ref{Jprop}) it follows
$$\lim_{\ve \to 0+}J\left({\rho\over\ve}\right)  =~\left\{\bega{rl} 0 \quad &\hbox{if}~~\rho>0,\\[1mm]
 1 \quad &\hbox{if}~~\rho=0.\enda\right.$$
 Hence, by the dominated convergence theorem, 
$$
\lim_{\ve \to 0+} \, W_\ve(x)~=~\meas\bigl( \{  \theta\in\Theta \, ; ~ \rho(\theta)  = 0 \} 
\bigr)
~ = ~|x|_{\chi}
$$
and this proves \eqref{i12}.
\v
{\bf 2.} The remaining statements in \eqref{i13} are a straightforward consequence of \eqref{i11}-\eqref{i12}. 
\endproof
\v
Next, we prove a lower semicontinuity result similar to Proposition~\ref{p:31}, where the mollification parameter 
$\ve$ now converges to zero. 

\begin{proposition}\label{p:42} Consider a 
sequence of irrigation plans $\chi_n \in \TP(\mu)$ and numbers $\ve_n>0$ such that 
$$
\chi_n \,\to\, \chi\in \TP(\mu) , \qquad \ve_n \to 0 \qquad \text{ as } n \to \infty. 
$$
In addition, assume that the corresponding stopping times satisfy the boundedness property \eqref{um2}
with a uniform constant $C$. Then, the mollified multiplicities \eqref{mmi} satisfy
$$
\limsup_{n \to \infty} \,W_{\ve_n}\bigl( \chi_n(\theta,t) , \chi_n \bigr) \,\le\, \bigl|\chi(\theta,t)\bigr|_{\chi}
$$
for a.e. $\theta \in \Theta$. 
\end{proposition}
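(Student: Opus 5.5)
The plan is to repeat the structure of the proof of Proposition~\ref{p:31}, with the points $x_n = \chi_n(\theta,t)$ and with the limit function $J_{\ve_n}$ degenerating to the indicator of $\{0\}$. Fix $\theta$ in the full-measure set where $d(\chi_n(\theta),\chi(\theta))\to 0$, and fix $t\ge 0$. Set $x_n \doteq \chi_n(\theta,t)$ and $x \doteq \chi(\theta,t)$. Uniform convergence on $[0,k]$ for $k\ge t$ gives $x_n\to x$.

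First I would truncate. Given $\delta>0$, choose $\lambda>C/\delta$, so that by \eqref{um2} the sets $\Theta_n(\lambda)=\{\theta'\,;~T_n(\theta')>\lambda\}$ satisfy \eqref{pum1}, namely $\meas(\Theta_n(\lambda))\le\delta$. Then
$$W_{\ve_n}(x_n,\chi_n)~\le~\delta+\int_\Theta J_{\ve_n}\Big(\min_{s\in[0,\lambda]}|\chi_n(\theta',s)-x_n|\Big){\bf 1}_{\Theta_n^c(\lambda)}(\theta')\,d\theta',$$
because for $\theta'\notin\Theta_n(\lambda)$ the path is constant after time $\lambda$. Taking the limsup and using Fatou's lemma (reverse form, with integrand bounded by $1$), it suffices to prove the pointwise bound
$$\limsup_{n\to\infty} J_{\ve_n}\Big(\min_{s\in[0,\lambda]}|\chi_n(\theta',s)-x_n|\Big){\bf 1}_{\Theta_n^c(\lambda)}(\theta')~\le~{\bf 1}_{\{\rho(\theta',x)=0\}},$$
for a.e.~$\theta'$. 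Here $\rho(\theta',x)=\min_{s}|\chi(\theta',s)-x|$ is as in the proof of Proposition~\ref{p:41}. Integrating this bound gives $\limsup_n W_{\ve_n}(x_n,\chi_n)\le |x|_\chi+\delta$, and $\delta$ is arbitrary.

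For the pointwise bound, take $\theta'$ in the full-measure set where $\chi_n(\theta')\to\chi(\theta')$ locally uniformly, and suppose $\rho(\theta',x)=r>0$. By uniform convergence on $[0,\lambda]$ and $x_n\to x$, for large $n$ one has $\min_{s\in[0,\lambda]}|\chi_n(\theta',s)-x_n|\ge r/2$. Hence $J_{\ve_n}(\cdot)\le J(r/(2\ve_n))\to 0$ by \eqref{Jprop} and monotonicity of $J$. If instead $\rho(\theta',x)=0$, the bound holds trivially since $J\le 1$. Note that the truncation $s\in[0,\lambda]$ is what makes uniform convergence usable: the metric $d$ only controls paths on compact time intervals. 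This is also why \eqref{um2} is needed, and the indicator ${\bf 1}_{\Theta^c(\lambda)}$ used in Proposition~\ref{p:31} is not even required here.

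The main obstacle is the subtlety that $\rho(\theta',x)=0$ means $x$ lies on the closed trajectory of $\chi(\theta',\cdot)$. The minimum over $t\ge 0$ is attained because the path is eventually constant, so $\rho=0$ is equivalent to $\chi(\theta',s)=x$ for some $s$. This matches exactly the definition \eqref{chi} of $|x|_\chi$, so no boundary issue arises.

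A second point to check is the ``a.e.~$\theta$'' quantifier. The exceptional set of $\theta$ (where convergence fails) is null, and the conclusion then holds for every $t\ge0$. The exceptional set of $\theta'$ inside the integral is the same null set, so Fatou's lemma applies without trouble.
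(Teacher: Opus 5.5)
Your proof is correct and follows essentially the same route as the paper: you truncate with $\lambda>C/\delta$ using \eqref{um2}, apply reverse Fatou, and show that for $\theta'\notin[x]_\chi$ the integrand vanishes in the limit, because the paths stay a positive distance from $x_n$ while $\ve_n\to 0$. Your pointwise step is slightly leaner than the paper's: comparing $\min_{s\in[0,\lambda]}$ with the global minimum $\rho(\theta',x)$ lets you avoid the lower semicontinuity of stopping times \eqref{liT} and the compact-range argument \eqref{disj} that the paper uses.
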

{\bf Proof.} {\bf 1.} Let be $\theta_0 \in \Theta$ such that $d(\chi_n(\theta_0),\chi(\theta_0)) \to 0$. This convergence occurs for a.e. $\theta_0 \in \Theta$. For a fixed $t_0 \in \R_+$, setting
\bel{xnx}
x_n \doteq \chi_n(\theta_0,t_0) , \quad x \doteq \chi(\theta_0,t_0)
\eeq
we have the convergence 
$x_n \to x$. We  claim that 
\bel{cla1}
\limsup_{n \to \infty}\, W_{\ve_n}(x_n, \chi_n) \,\le\, |x|_{\chi}.
\eeq
\v
{\bf 2.} To prove (\ref{cla1}), consider the sets
\bel{Then}
\Theta_n(\lambda) \doteq\, \bigl\{\theta \in \Theta \, ; ~ T_n > \lambda\bigr\}, \qquad\qquad \Theta(\lambda) \,\doteq\, 
\bigl\{\theta \in \Theta \, ;~ T(\theta) > \lambda \bigr\},\eeq
\bel{xchi} [ x ]_{\chi} \,\doteq\, \bigl\{ \theta \in \Theta \, ;~ x = \chi(\theta,t)~~ \hbox{for some} ~t\geq 0\bigr\}. 
\eeq
As in Step {\bf 1} of the proof of Proposition~\ref{p:31}, by (\ref{liT}) it follows
$$
\bigcap_{k\geq 1} \bigcup_{n \ge k} \Theta_n^c(\lambda)~ \subseteq ~\Theta^c(\lambda).
$$
By (\ref{um2}), for every $\delta > 0$ and $\lambda > C/\delta$ there holds
$$
\meas\Big(\Theta_n(\lambda)\Big)\,\leq \,\delta\qquad  \text{ for every } ~n\geq 1.
$$
Moreover, by convergence of the irrigation plans, 
$$\hbox{for a.e.}~
\bar \theta ~\in ~ \bigcap_{k\geq 1} \bigcup_{n \ge k} \Theta_n^c(\lambda)% \cap ([x]_{\chi})^c
$$
and for every subsequence $(n_k)_{k\geq 1}$ such that $\bar \theta  \in \Theta^c_{n_k}$ 
(so that $T_{n_k}(\bar \theta)\leq \lambda$)  for all $k\geq 1$, 
we have the convergence 
$$\chi_{n_k}(\bar \theta ,t)\, \to\, \chi(\bar \theta ,t)$$ uniformly for $t\geq 0$.

Next, assume that the point $x$ in (\ref{xnx}) satisfies
$$x \,\notin\,\hbox{Range}~\bigl( \chi(\bar \theta )\bigr)
~\doteq~\bigl\{\chi(\bar \theta, t)\,;~t\geq 0\bigr\}.$$ Since Range$\bigl( \chi_n(\bar \theta )\bigr)$,
Range$\bigl( \chi(\bar \theta )\bigr)$ are all compact sets and $x_n\to x$ as $n\to \infty$,
 there exists a radius $\bar r>0$ such that for every $n$ large enough
\bel{disj}
B(x_{n},\bar r) \cap \hbox{Range}\bigl(\chi_{n}(\bar \theta )\bigr) \,=\, \emptyset .
\eeq
This implies 
\bel{ineq}
\limsup_{n \to \infty} \,J_{\ve_n} \!\left( \min_{t \ge 0} | \chi_n(\bar \theta ,t) - x_n| \right)
% {\bf 1}_{\Theta_n^c(\lambda) \cap ([x]_{\chi})^c}(\bar \theta )
  \,=\, 0.  
\eeq
\v
{\bf 3.} Using (\ref{ineq}) we can now prove the desired inequality:
$$
\bega{rl}
\ds  \limsup_{n\to\infty} W_{\ve_n}(x_n,\chi_n) &\ds=~ \limsup_{n\to\infty} \int_{\Theta} J_{\ve_{n}}\!\left(  \min_{t \ge 0} |\chi_{n}(\theta,t) - x_{n}| \right) \, d\theta \\[4mm]
&\ds \le~ \limsup_{n\to\infty} \int_{[x]_{\chi}} J_{\ve_{n}}\!\left(  \min_{t \ge 0} |\chi_{n}(\theta,t) - x_{n}| \right) {\bf 1}_{\Theta_n^c(\lambda)}(\theta) \, d\theta \\[4mm]
&\ds \qquad + \limsup_{n\to\infty} \int_{\Theta\setminus [x]_{\chi}} J_{\ve_{n}}\!\left(  \min_{t \ge 0} |\chi_{n}(\theta,t) - x_{n}| \right) {\bf 1}_{\Theta_n^c(\lambda)}(\theta) \, d\theta + \delta \\[4mm]
&\le~ |x|_{\chi} + 0 +\delta.
\enda
$$
Since $\delta>0$ was arbitrary, this completes  the proof. 
\endproof

Combining the previous results one obtains
\begin{theorem} \label{t:43} Let $\mu$ be a positive Radon measure on $\R^d$, with bounded support.
On the set $\TP(\mu)$ of admissible irrigation plans, 
the  functionals $\E^{\alpha}_\ve$ defined at (\ref{mmi})-(\ref{TCep})  $\Gamma$-converge to $\E^{\alpha}$ as $\ve \to 0$. 
\end{theorem}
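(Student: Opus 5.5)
We verify the two conditions ${\bf (\Gamma 1)}$ and ${\bf (\Gamma 2)}$ separately, for an arbitrary sequence $\ve_n\to 0$.

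The recovery condition ${\bf (\Gamma 2)}$ is the easy one. Given $\chi\in\TP(\mu)$, we take the constant sequence $\chi_n\equiv\chi$, so that trivially $\chi_n\to\chi$. By (\ref{i13}) in Proposition~\ref{p:41} we have $\E^\alpha_{\ve_n}(\chi)\le\E^\alpha(\chi)$ for every $n$, which already gives the required $\limsup$ inequality. We may even note that $\E^\alpha_{\ve_n}(\chi)\to\E^\alpha(\chi)$.

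For the liminf condition ${\bf (\Gamma 1)}$, let $\chi_n\to\chi$. If $\liminf_n \E^\alpha_{\ve_n}(\chi_n)=+\infty$ there is nothing to prove. Otherwise we pass to a subsequence realizing the liminf with $\E^\alpha_{\ve_n}(\chi_n)\le C$. By Remark~\ref{r:par} and the reparametrization invariance of $W_\ve$ and $\E^\alpha_\ve$ noted in Section~\ref{sec:3}, we may assume every $\chi_n$ is parameterized by arc length. Since the limit is only required a.e.\ in $\theta$ and only up to reparametrization, this is compatible with the convergence. One delicate point is whether arc-length reparametrization preserves the convergence $\chi_n\to\chi$; if it does not, we would instead work with $|\dot\chi_n|$ and use lower semicontinuity of length along each path.

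Since $W_{\ve}\le M$, the argument in the proof of Theorem~\ref{t:existence} gives
$$\int_\Theta T_n(\theta)\,d\theta\le M^{1-\alpha}C,$$
so the bound (\ref{um2}) holds. Proposition~\ref{p:42} then applies. For a.e.\ $\theta$ and every $t$,
$$\limsup_n W_{\ve_n}(\chi_n(\theta,t),\chi_n)\le|\chi(\theta,t)|_\chi .$$
Since $\alpha-1\le 0$, raising to the power $\alpha-1$ reverses the inequality. Combining this with $\liminf_n{\bf 1}_{[0,T_n(\theta)]}(t)\ge{\bf 1}_{[0,T(\theta)]}(t)$, which follows from (\ref{liT}), we get, exactly as in Proposition~\ref{p:32},
$$\liminf_n [W_{\ve_n}(\chi_n(\theta,t),\chi_n)]^{\alpha-1}{\bf 1}_{[0,T_n(\theta)]}(t)\ge |\chi(\theta,t)|_\chi^{\alpha-1}{\bf 1}_{[0,T(\theta)]}(t).$$
Fatou's lemma in $(\theta,t)$ together with $|\dot\chi|\le1$ then yields $\liminf_n\E^\alpha_{\ve_n}(\chi_n)\ge\E^\alpha(\chi)$.

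The main obstacle is the justification of Proposition~\ref{p:42} for the relevant points $x=\chi(\theta_0,t_0)$. It must hold for a.e.\ $\theta_0$ and all $t_0$, including $t_0$ beyond $T_n$. We also need the convention $0^{\alpha-1}=+\infty$ to be handled consistently when $|x|_\chi=0$; in that case Fatou still applies. Beyond that, the argument only assembles Propositions~\ref{p:41}, \ref{p:42} and the Fatou argument of Proposition~\ref{p:32}.
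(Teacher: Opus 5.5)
Your argument is the paper's proof written out in more detail. $(\Gamma2)$ comes from Proposition~\ref{p:41} with the constant sequence $\chi_n\equiv\chi$, and $(\Gamma1)$ from Proposition~\ref{p:42} plus the Fatou argument of Proposition~\ref{p:32}; your reduction to bounded energy, which yields (\ref{um2}) as in Theorem~\ref{t:existence}, correctly supplies a step the paper leaves implicit.

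The arc-length issue you flag is real, but your justification that convergence holds ``only up to reparametrization'' is not right. Convergence in Definition~\ref{d:iplan_conv} depends on the parametrization: a path that pauses until time $n$ and then makes an excursion converges to the path without the excursion, yet after arc-length reparametrization the excursion survives in the limit. Your fallback also falls short, because without arc-length parametrization bounded energy controls lengths, not the stopping times required in (\ref{um2}). The paper's proof passes over this same point, since Proposition~\ref{p:32} assumes arc-length parametrization and both it and Proposition~\ref{p:42} assume (\ref{um2}).
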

{\bf Proof.} The property ${ \bf ( \Gamma2 )}$ is an immediate consequence of Proposition \ref{p:41}, taking
$\chi_n=\chi$ for every $n\geq 1$.  On the other hand, the property ${\bf (\Gamma1)}$ is obtained using
Proposition~\ref{p:42} and repeating the same arguments as in the proof of Proposition~\ref{p:32}. 
\endproof

\section{An alternative mollification procedure}
\label{sec:5}
\setcounter{equation}{0}
In this section we consider a somewhat different mollification procedure, where the 
supremum in (\ref{mmi}) is replaced by an integral over time.   For computational purposes, this 
method appears to be easier to implement numerically. 
We will prove that the properties ${\bf (\Gamma1)}$ and ${\bf (\Gamma2)}$ remain valid for the corresponding 
mollified irrigation cost. 
%
%{\color{red}  ${\bf (\Gamma2)}$ is clear, but  ${\bf (\Gamma1)}$ is related to lower semicontinuity,
%which does not hold...}
%
 However, as it will be shown by a counterexample, this new irrigation cost is not lower semicontinuous.
We thus cannot guarantee the existence of global minimizers.

To fix ideas, 
consider a smooth, positive, decreasing function $J:\R_+\mapsto\R_+$ such that
\bel{Jpro2}
1~\leq~\int_0^{+\infty} J(r)\, dr ~<~+\infty,\eeq
and set $J_\ve(r) = J(r/\ve)$, as in (\ref{Jep}).
Observe that the monotonicity and the integrability assumptions together imply
\bel{Jpro3}\lim_{r\to+\infty} r\,J(r)~=~0.\eeq
Otherwise we could find an increasing sequence $(r_k)_{k\geq 0}$ with $r_k> 2 r_{k-1}$, such that 
$r_k J(r_k)\geq \delta>0$ for every $k$.   This would imply
$$\int_0^{+\infty} J(r)\, dr ~\geq~\sum_{k\geq 1} \int_{r_k/2}^{r_k} J(r)\, dr~\geq~\sum_{k\geq 1} 
{r_k\over 2} J(r_k)~\geq~\sum_{k\geq 1} {\delta\over 2} ~=~+\infty.$$

Given an irrigation plan $\chi\in\TP(\mu)$, we now define the mollified multiplicity of a point $x\in\R^d$ as
\bel{mm1}\Tilde W_\ve(x,\chi)
~\doteq~\int_\Theta\min\left\{ 1, \int_0^{+\infty} {1\over \ve} \, J_\ve\Big(\bigl| \chi(\theta,t)-x\bigr| \Big)\cdot |\dot \chi(\theta,t)|\
\, dt \right\}d\theta.\eeq
The corresponding mollified irrigation cost is then defined as
\bel{mic1}\Tilde \E^\alpha_\ve(\chi)~\doteq~\int_\Theta \left(\int_{\R_+} \bigl[ \Tilde W_\ve(\chi(\theta,t))
\bigr]^{\alpha-1} \cdot |\dot \chi(\theta,t)|\, dt\right)
d\theta.\eeq
For the family of mollified irrigation costs $\Tilde \E^\alpha_\ve(\chi)$,
the property  ${\bf (\Gamma2)}$ is an immediate consequence of

\begin{proposition}\label{p:61}
Let $\chi \in \TP(\mu)$ be an admissible irrigation plan and consider any point $x \in \R^d\setminus\{0\}$. Then there holds
\bel{i61}
\lim_{\ve \to 0} \,\Tilde W_\ve(x) \,= \,|x|_{\chi}.
\eeq
Moreover, the mollified irrigation costs satisfy
\bel{i62}
\lim_{\ve \to 0 }\, \Tilde\E^{\alpha}_{\ve}(\chi) \,=\, \E^{\alpha}(\chi).  
\eeq
\end{proposition}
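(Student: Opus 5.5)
The plan is to reduce everything to a pointwise statement about the inner time integral in (\ref{mm1}), and then pass to the limit under the $\theta$-integral. By Remark~\ref{r:par} we may assume $\chi$ is parameterized by arc length, as in (\ref{alp}). This is legitimate because the inner integral $\int (1/\ve) J_\ve(|\chi-x|)\,|\dot\chi|\,dt$ is invariant under reparameterization. For fixed $\theta$ and $x\neq 0$, set
$$f_\ve(\theta)\doteq \int_0^{T(\theta)} {1\over\ve}\,J\Big({|\chi(\theta,t)-x|\over\ve}\Big)\,dt .$$
The goal is to show that $\min\{1,f_\ve(\theta)\}\to {\bf 1}_{[x]_\chi}(\theta)$ as $\ve\to 0$, with $[x]_\chi$ as in (\ref{xchi}). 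Since the integrand is bounded by $1$, dominated convergence on $\Theta$ then gives (\ref{i61}).

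\emph{Case $\theta\notin[x]_\chi$.} The range of $\chi(\theta,\cdot)$ is compact, so $\rho\doteq\min_t|\chi(\theta,t)-x|>0$. Since $J$ is decreasing,
$$f_\ve(\theta)\le T(\theta)\,{1\over\ve}J(\rho/\ve)={T(\theta)\over\rho}\cdot{\rho\over\ve}J(\rho/\ve)\;\to\;0$$
by (\ref{Jpro3}). Here $T(\theta)<\infty$ for a.e.~$\theta$, which follows from the finiteness of the relevant costs or from the bounded support of $\mu$ after straightening paths.

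\emph{Case $\theta\in[x]_\chi$.} Let $x=\chi(\theta,t_0)$. Since $\chi(\theta,0)=0\neq x$ and the path is 1-Lipschitz, we have $t_0\ge|x|>0$. On $[t_0-|x|,t_0]$ the bound $|\chi(\theta,t)-x|\le t_0-t$ holds, so monotonicity of $J$ gives
$$f_\ve(\theta)\ge\int_0^{|x|/\ve}J(r)\,dr\;\to\;\int_0^\infty J\ge1$$
by (\ref{Jpro2}). This is exactly where the hypothesis $x\neq0$ is used: it provides a backward interval of fixed length even when $x$ is the endpoint $\chi(\theta,T(\theta))$.

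For (\ref{i62}) the strategy is as follows. First, for a.e.~$\theta$ and a.e.~$t\in(0,T(\theta))$, the point $\chi(\theta,t)$ is nonzero; returns to the origin are handled by noting that the set where they occur contributes measure zero, or is treated as in the remark below. Applying (\ref{i61}) at such points gives $[\Tilde W_\ve(\chi(\theta,t))]^{\alpha-1}\to|\chi(\theta,t)|_\chi^{\alpha-1}$. Fatou's lemma then yields $\liminf_{\ve\to0}\Tilde\E^\alpha_\ve(\chi)\ge\E^\alpha(\chi)$, which settles the case $\E^\alpha(\chi)=\infty$. For the reverse inequality we need a dominating function. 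The backward-interval argument above, applied to every $\theta'\in[x]_\chi$, gives the uniform bound
$$\Tilde W_\ve(x)\;\ge\;\min\Big\{1,\int_0^{|x|/\ve}J\Big\}\,|x|_\chi .$$
Fix $R$ with $\int_0^RJ\ge 1/2$. Then, wherever $|\chi(\theta,t)|\ge R\ve$, the integrand is bounded by $2^{1-\alpha}|\chi(\theta,t)|_\chi^{\alpha-1}$, which is integrable when $\E^\alpha(\chi)<\infty$.

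The main obstacle I expect is the small neighbourhood $|\chi(\theta,t)|<R\ve$ of the origin, where this bound degenerates. There I would try the crude estimate $\int_0^{|x|/\ve}J\ge J(R)\,|x|/\ve$. The contribution of arc-length times $t$ with $|\chi(\theta,t)|<R\ve$ is then controlled by
$$\int_\Theta\int {\bf 1}_{\{|\chi|<R\ve\}}\,\Big(\ve\over |\chi|\Big)^{1-\alpha}|\chi|_\chi^{\alpha-1}\,dt\,d\theta .$$
Near the origin $|\chi|_\chi$ is close to $M$ on most of the mass, since paths leave the origin together; this should make the integral tend to zero. If this does not close, a fallback is to apply Fatou in the opposite direction on the complementary region only, and to show that the near-origin portion of $\Tilde\E^\alpha_\ve$ vanishes directly using $|x|\le t$.
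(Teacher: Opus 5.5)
\paragraph{Assessment.} Your proof of (\ref{i61}) is the paper's two-case argument. The backward interval $[t_0-|x|,t_0]$, whose length does not depend on $\theta$, is a harmless refinement, and $T(\theta)<\infty$ holds for every $\theta$ directly by Definition~\ref{d:21}. For (\ref{i62}) the paper only says ``by dominated convergence''. You rightly notice that, unlike (\ref{i10}), the bound $\Tilde W_\ve\geq|x|_\chi$ is not available. Your Fatou lower bound is correct, and so is your domination by $2^{1-\alpha}|\chi|_\chi^{\alpha-1}$ on $\{|\chi(\theta,t)|\geq R\ve\}$.

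\paragraph{The gap: the region near the origin.} Your bound $(\ve/|\chi|)^{1-\alpha}|\chi|_\chi^{\alpha-1}$ does not give a vanishing contribution there, even when $|\chi|_\chi\equiv M$. So the heuristic that paths leave the origin together would not rescue it, and that heuristic is false anyway (think of two point masses in different directions served by two segments). Here is a counterexample to the estimate, with $\alpha<1$:
\begin{itemize}
\item Let all the mass follow a single path. Before heading to its target, the path performs, for each $k$, back-and-forth excursions of total length $\ell_k$ inside $B(0,\delta_k)$, accumulating at $t=0$.
\item Choose $\sum_k\ell_k<\infty$, and choose $\delta_k/\ve_k\to 0$ so fast that $\ell_k(\ve_k/\delta_k)^{1-\alpha}\geq 1$.
\item Then $\E^\alpha(\chi)<\infty$, but your near-origin integral at $\ve=\ve_k$ is at least $M^\alpha$.
\end{itemize}
Your fallback ``using $|x|\leq t$'' does not help either, because returns to the origin are not confined to small times.

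\paragraph{The missing idea.} Bound $\Tilde W_\ve$ from below near the origin using \emph{all} paths, not only those through $x$. Suppose $|x|<R\ve$ and $T(\theta')\geq\ve$. Then $|\chi(\theta',s)-x|\leq s+|x|\leq (R+1)\ve$ for $s\in[0,\ve]$, so the inner integral in (\ref{mm1}) is at least $J(R+1)$. Hence
\[
\Tilde W_\ve(x)~\geq~\min\{1,J(R+1)\}\cdot\meas\bigl(\{\theta'\,;~T(\theta')\geq\ve\}\bigr)~\geq~c_0~>~0
\]
for all small $\ve$. This holds because $\meas(\{T\geq\ve\})\to\meas(\{T>0\})$, and the case $\meas(\{T>0\})=0$ is trivial.

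Consequently the near-origin part of $\Tilde\E^\alpha_\ve(\chi)$ is at most
\[
c_0^{\alpha-1}\int_\Theta\meas\bigl(\{t\in[0,T(\theta)]\,;~|\chi(\theta,t)|<R\ve\}\bigr)\,d\theta .
\]
This tends to $0$ by dominated convergence, for two reasons. First, $\int_\Theta T\,d\theta\leq M^{1-\alpha}\E^\alpha(\chi)<\infty$. Second, an arc-length parameterized path spends zero time at the origin.

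With this replacement your argument closes, and it also supplies the dominating function that the paper leaves implicit.
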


{\bf Proof.} Without loss of generality, we can assume that the paths $t\mapsto \chi(\theta,t)$ are all parameterized by arc-length, as in (\ref{alp}).
Consider any point $x\in \R^d\setminus\{0\}$.  For any given $\theta\in \Theta$, two cases may occur.

CASE 1: $x=\chi(\theta, \tau)$ for some $\tau>0$.   In this case
$$\int_0^{T(\theta)} {1\over \ve} \, J_\ve\Big(\bigl| \chi(\theta,t)-x\bigr| \Big)
\, dt~\geq~\int_0^\tau {1\over \ve} \, J_\ve(\tau-t)
\, dt~ ~=~\int_0^{\tau/\ve} J(\tau-t)\, dt.$$
Letting $\ve\to 0$, by  (\ref{Jpro2}) we obtain 
\bel{limi6}
\liminf_{\ve\to 0}\int_0^{T(\theta)} {1\over \ve} \, J_\ve\Big(\bigl| \chi(\theta,t)-x\bigr| \Big)
\, dt~\geq ~\int_0^{+\infty} J(r)\, dr~\geq ~1.\eeq

CASE 2: $x\not= \chi(\theta, t)$ for any $t\geq 0$.   
By compactness, this implies
$$\bigl| x-\chi(\theta, t)\bigr|~\geq~\delta~>~0$$
for all $t\geq 0$. Therefore
$$\int_0^{T(\theta)} {1\over \ve} \, J_\ve\Big(\bigl| \chi(\theta,t)-x\bigr| \Big)
\, dt~\leq~\int_0^{T(\theta)} {1\over \ve} \, J_\ve(\delta)~=~T(\theta) {J(\delta/\ve)\over\ve}\,.$$
Letting $\ve\to 0$, by  (\ref{Jpro3}) we obtain 
\bel{limi7}
\limsup_{\ve\to 0}\int_0^{T(\theta)} {1\over \ve} \, J_\ve\Big(\bigl| \chi(\theta,t)-x\bigr| \Big)
\, dt~\leq ~\lim_{\ve\to 0} ~T(\theta) {J(\delta/\ve)\over\ve}~=~0.\eeq
Together, (\ref{limi6}) and (\ref{limi7}) yield (\ref{i61}).

The limit (\ref{i62}) now follows by the dominated convergence theorem.
\endproof

The above proposition justifies the use of the above mollification approach as a computational tool.
On the other hand, the following counterexample shows that, for a fixed $\ve > 0$ and $0\leq \alpha <1$, the mollified irrigation cost $\Tilde \E^\alpha_\ve$
is not lower semicontinuous.   For this reason, the existence of a minimizer of the mollified cost
cannot be guaranteed.

\begin{example} {\rm  Let $\mu$ be a measure consisting 
of two point masses, say of size $m_1, m_2$, located at points
$P_1, P_2$.
As shown in Fig.~\ref{f:ir175}, let $\chi$ be an irrigation plan where all water particles move along
two disjoint paths $\gamma_1, \gamma_2$ of length $\ell_1, \ell_2$ respectively.
For $n\geq 1$, let $\chi_n$ be an irrigation plan where particles again move along
 two paths.  The first is always the same: $\gamma_{1,n} = \gamma_1$. The second path
 is  $\gamma_{2,n}$, obtained replacing a section of $\gamma_1$ with a zig-zag polygonal,
 so that its total length becomes larger. More precisely, we assume
 \bel{ellsize}
 \ell_2\,<\,\ell_{2,n}\,=\, \ell_2^+ \,<\!<\,1,\qquad\qquad \ell_1\,>\,1. \eeq As usual, all paths will be parameterized by arc-length.

We now assume that $\ve>0$ is sufficiently large and that all paths are sufficiently close to each other, so that
$${1\over\ve} J_\ve\bigl(|x_1-x_2|\bigr)~=~1$$
for every couple of points $x_1,x_2$ on any two of these curves.

In view of (\ref{ellsize}), the  mollified cost of the irrigation plan $\chi$ is given by
\bel{TEx}\bega{rl}
\Tilde\E(\chi)&\ds = ~\int_0^{\ell_1} \left[ m_1 +\int_0^{\ell_2} {1\over \ve} J_{\ve}\bigl( | \gamma_2(s) - \gamma_1(t) 
| \bigr)m_2 ds    \right]^{\alpha - 1} \,m_1 dt\\[4mm] 
&\ds \qquad + \int_0^{\ell_2} \left[m_1 +\int_0^{\ell_2} {1\over \ve} J_{\ve}\bigl( | \gamma_2(s) - \gamma_2(t) 
| \bigr) m_2ds    \right]^{\alpha - 1} \, m_2 dt\\[4mm]
&=~\ds [m_1+m_2\ell_2]^{\alpha-1} (m_1\ell_1+m_2\ell_2).
\enda
\eeq
A similar computation shows that the mollified cost of the irrigation plans $\chi_n$ is given by
$$
\Tilde\E(\chi_n)~=~  [m_1+m_2\ell_2^+]^{\alpha-1} (m_1\ell_1+m_2\ell_2^+).$$
Differentiating the right hand side of (\ref{TEx}) w.r.t.~$\ell_2$ one obtains
\bel{pel2}
\bega{l}\ds{\partial\over \partial \ell_2}  \Big( [m_1+m_2\ell_2]^{\alpha-1} (m_1\ell_1+m_2\ell_2)\Big)
\\[2mm]\qquad 
= ~  [m_1+m_2\ell_2]^{\alpha-1} m_2 +(\alpha-1)  [m_1+m_2\ell_2]^{\alpha-2}m_2(m_1\ell_1+m_2\ell_2)
\\[2mm]
\qquad \ds=~ [m_1+m_2\ell_2]^{\alpha-1} m_2\left( 1 - {(1-\alpha) (m_1\ell_1+m_2\ell_2) \over m_1+m_2\ell_2}\right).
\enda\eeq
We observe that, by choosing the length $\ell_1$ sufficiently large, the right hand side of (\ref{pel2})
becomes negative. Therefore, if $\ell_2^+$ is slightly greater than $\ell_2$, it follows
$$\Tilde\E(\chi_n)~<~\E(\chi),$$
showing that lower semicontinuity does not hold. 

\begin{figure}[ht]
\centerline{\hbox{\includegraphics[width=7cm]
{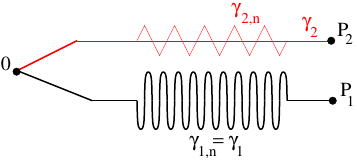}}}
\caption{\small An example showing that, when the mollified multiplicity \eqref{mm1} is adopted, the mollified irrigation cost is not lower semicontinuous. In the limit as $n\to\infty$,  the paths $\gamma_{2,n}$ are replaced by the shorter path $\gamma_2$. Hence the transportation cost along $\gamma_2$ decreases. However, along $\gamma_1$ the mollified multiplicity decreases and hence the transportation cost is larger.
}
\label{f:ir175}
\end{figure}
}
\end{example}

In spite of the previous example, which occurs for a fixed value of $\ve>0$, the lower semicontinuity result
stated in Proposition~\ref{p:42}  remains valid in the limit as $\ve_n\to 0$ also for the 
mollified multiplicity (\ref{mm1}).
 
\begin{proposition}\label{p:53} Consider a 
sequence of irrigation plans $\chi_n \in \TP(\mu)$ and numbers $\ve_n>0$ such that 
$$
\chi_n \,\to\, \chi\in \TP(\mu) , \qquad \ve_n \to 0 \qquad \text{ as } n \to \infty. 
$$
In addition, assume that the corresponding stopping times satisfy the boundedness property \eqref{um2}
with a uniform constant $C$. Then, the mollified multiplicities \eqref{mm1} satisfy
$$
\limsup_{n \to \infty} \,\Tilde W_{\ve_n}\bigl( \chi_n(\theta,t) , \chi_n \bigr) \,\le\, \bigl|\chi(\theta,t)\bigr|_{\chi}
$$
for a.e. $\theta \in \Theta$, $t\in \R_+$\,.
\end{proposition}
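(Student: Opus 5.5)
We follow the scheme of the proof of Proposition~\ref{p:42}, replacing the pointwise bound (\ref{ineq}) on the max-type kernel with a bound on the time integral in (\ref{mm1}). Fix $\theta_0$ such that $d(\chi_n(\theta_0),\chi(\theta_0))\to 0$ (true for a.e.~$\theta_0$), fix $t_0\ge 0$, and set $x_n\doteq\chi_n(\theta_0,t_0)$ and $x\doteq \chi(\theta_0,t_0)$, so that $x_n\to x$. We assume throughout that all plans are parameterized by arc-length, which changes neither $\Tilde W_\ve$ nor the convergence in Definition~\ref{d:iplan_conv} up to reparameterization. Write
$$\Tilde W_{\ve_n}(x_n,\chi_n)=\int_\Theta \min\{1,\,F_n(\theta)\}\,d\theta,\qquad F_n(\theta)\doteq\int_0^{T_n(\theta)}{1\over\ve_n}J_{\ve_n}\bigl(|\chi_n(\theta,t)-x_n|\bigr)\,dt .$$

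Given $\delta>0$, choose $\lambda>C/\delta$, so that $\meas(\Theta_n(\lambda))\le\delta$ for all $n$ by (\ref{um2}). Split $\Theta$ into $[x]_\chi$ (defined at (\ref{xchi})) and its complement. On $[x]_\chi$ we simply bound the integrand by $1$, which contributes at most $|x|_\chi$. On $\Theta_n(\lambda)$ the contribution is at most $\delta$. It remains to show that
$$\limsup_{n\to\infty}\int_{\Theta\setminus[x]_\chi}\min\{1,F_n(\theta)\}\,{\bf 1}_{\Theta_n^c(\lambda)}(\theta)\,d\theta=0 .$$
By Fatou's lemma (reverse form, the integrand being bounded by $1$), it suffices to prove that for a.e.~$\bar\theta\notin[x]_\chi$ the quantity $F_n(\bar\theta){\bf 1}_{\Theta_n^c(\lambda)}(\bar\theta)$ tends to $0$.

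This is the key step. Fix such a $\bar\theta$ along a subsequence with $T_{n_k}(\bar\theta)\le\lambda$. If no such subsequence exists, the indicator eventually vanishes and there is nothing to prove. Since $x\notin \mathrm{Range}(\chi(\bar\theta))$, which is compact, there is $\bar r>0$ with $|\chi(\bar\theta,t)-x|\ge 2\bar r$ for all $t$. Uniform convergence of $\chi_{n_k}(\bar\theta)$ to $\chi(\bar\theta)$ on $[0,\lambda]$, which covers the whole path since $T_{n_k}\le\lambda$ and the path is constant afterwards, together with $x_{n_k}\to x$, gives (\ref{disj}): $|\chi_{n_k}(\bar\theta,t)-x_{n_k}|\ge\bar r$ for all $t$ and all large $k$. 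By monotonicity of $J$ we get
$$F_{n_k}(\bar\theta)\le \lambda\,{J(\bar r/\ve_{n_k})\over \ve_{n_k}}=\lambda\,\bar r^{-1}\Bigl[(\bar r/\ve_{n_k})\,J(\bar r/\ve_{n_k})\Bigr]\to 0,$$
using (\ref{Jpro3}) as $\ve_{n_k}\to0$. This is exactly the mechanism of (\ref{limi7}), now with uniform control in $n$ thanks to the bound $T_n\le\lambda$ and the uniform separation radius $\bar r$. Since this holds along every such subsequence, the limsup is $0$.

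Combining the three pieces gives $\limsup_n\Tilde W_{\ve_n}(x_n,\chi_n)\le|x|_\chi+\delta$, and letting $\delta\to0$ yields the claim. The main obstacle is the key step above. Unlike the max kernel, the time integral could a priori accumulate mass from long paths. This is prevented only because we restrict to $\Theta_n^c(\lambda)$, where path lengths are bounded by $\lambda$, and use the decay $rJ(r)\to0$ from (\ref{Jpro3}). One must also check measurability and the a.e.~set of $\bar\theta$, exactly as in Proposition~\ref{p:42}.
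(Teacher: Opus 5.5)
Your proof is correct and follows the paper's route: the same splitting as in Proposition~\ref{p:42} into $[x]_\chi$, its complement, and $\Theta_n(\lambda)$. You also supply the justification of (\ref{ineq5}) that the paper only asserts, via the separation radius $\bar r$, the bound $T_n\le\lambda$, and (\ref{Jpro3}).

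One small correction: arc-length reparameterization is not a genuine ``without loss of generality'' here, since it can destroy the convergence in the metric $d$ and it moves the points $\chi_n(\theta,t)$ appearing in the statement. You do not need it, however, because $|\dot\chi_n|\le 1$ and $\dot\chi_n=0$ for $t>T_n$ already give $\Tilde W_{\ve_n}(x_n,\chi_n)\le\int_\Theta\min\{1,F_n(\theta)\}\,d\theta$, which is all your argument uses.
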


{\bf Proof.} We follow the same steps  as in the proof of Proposition~\ref{p:42}.

{\bf 1.} Let be $\theta_0 \in \Theta$ such that $d(\chi_n(\theta_0),\chi(\theta_0)) \to 0$. This convergence occurs for a.e. $\theta_0 \in \Theta$.   For a fixed $t_0 \in \R_+$, defining
$$
x_n \doteq \chi_n(\theta_0,t_0) , \quad x \doteq \chi(\theta_0,t_0),
$$
we have the convergence
$x_n \to x$. We claim that 
\bel{cla5}
\limsup_{n \to \infty}\, \Tilde W_{\ve_n}(x_n, \chi_n) \,\le\, |x|_{\chi}\,.
\eeq
The proof of (\ref{cla5}) is achieved by the same arguments used in step {\bf 2} of the proof
of Proposition~\ref{p:42}.    The only difference is that now the limit (\ref{ineq}) is replaced by
\bel{ineq5}
\limsup_{n \to \infty} 
\int_{\Theta\setminus [x]_\chi}\min\left\{ 1, \int_0^{+\infty} {1\over \ve_n} \, J_{\ve_n}\Big(\bigl| \chi_n(\bar \theta,t)-x\bigr| \Big)\cdot |\dot \chi_n(\bar \theta,t)|\
\, dt \right\}d\bar \theta
 \,=\, 0.  
\eeq
\v
{\bf 2.} Using (\ref{ineq5}) we can now prove the desired inequality:
$$
\bega{l} \ds\limsup_{n\to\infty}\,
 \Tilde W_{\ve_n}(x_n,\chi_n)\\[4mm] 
 \ds =~ \limsup_{n\to\infty} \int_{\Theta} \min\left\{ 1, \int_0^{+\infty} {1\over \ve_n} \, J_{\ve_n}\Big(\bigl| \chi_n(\theta,t)-x_n\bigr| \Big)\cdot \bigl|\dot \chi_n(\theta,t)\bigr|
\, dt \right\}d\theta  \\[4mm]
\ds \le~ \limsup_{n\to\infty} \int_{[x]_{\chi}}  \min\left\{ 1, \int_0^{+\infty} {1\over \ve_n} \, J_{\ve_n}\Big(\bigl| \chi_n(\theta,t)-x_n\bigr| \Big)\cdot \bigl|\dot \chi_n(\theta,t)\bigr|
\, dt \right\}{\bf 1}_{\Theta_n^c(\lambda)}(\theta) \, d\theta \\[4mm]
\ds \qquad + \limsup_{n\to\infty} \int_{\Theta\setminus [x]_{\chi}} \min\left\{ 1, \int_0^{+\infty} {1\over \ve_n} \, J_{\ve_n}\Big(\bigl| \chi_n(\theta,t)-x_n\bigr| \Big)\cdot \bigl|\dot \chi_n(\theta,t)\bigr|
\, dt \right\}{\bf 1}_{\Theta_n^c(\lambda)}(\theta) \, d\theta + \delta \\[4mm]
\le~ |x|_{\chi} +0 + \delta.
\enda
$$
Since $\delta>0$ is arbitrary, this achieves the proof. 
\endproof

{}From Proposition~\ref{p:53} it follows that the property ${\bf (\Gamma1)}$ is also satisfied. Summarizing
the previous analysis we thus have

\begin{theorem} \label{t:53} Let $\mu$ be a positive Radon measure on $\R^d$, with bounded support.
On the set $\TP(\mu)$ of admissible irrigation plans, 
the  functionals $\Tilde \E^{\alpha}_\ve$ defined at (\ref{mm1})-(\ref{mic1})  $\Gamma$-converge to $\E^{\alpha}$ as $\ve \to 0$. 
\end{theorem}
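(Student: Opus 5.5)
The plan is to verify the two conditions ${\bf (\Gamma1)}$ and ${\bf (\Gamma2)}$ separately, following the same pattern as the proof of Theorem~\ref{t:43}. As in Section~\ref{sec:4}, Skorokhod's theorem and the weak compactness of finite measures reduce $\Gamma$-convergence on $\TP(\mu)$ to these two sequential conditions along an arbitrary sequence $\ve_n\to 0$.

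For ${\bf (\Gamma2)}$ we take the constant recovery sequence $\chi_n=\chi$ and invoke Proposition~\ref{p:61}. By (\ref{i62}) it gives $\lim_{n\to\infty}\Tilde\E^\alpha_{\ve_n}(\chi)=\E^\alpha(\chi)$, which is stronger than the required $\limsup$ inequality. If one is worried about the dominated convergence step inside Proposition~\ref{p:61}, the case $\E^\alpha(\chi)=+\infty$ is trivial. Otherwise we use the bound $\Tilde W_\ve\geq$ an $\ve$-independent quantity comparable to $|x|_\chi$ for small $\ve$, coming from the Case~1 estimate in that proof, which gives a domination of the integrand by a multiple of $|\chi(\theta,t)|_\chi^{\alpha-1}$. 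We take this step as given, since the proposition is already stated.

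For ${\bf (\Gamma1)}$, let $\chi_n\to\chi$ with $\ve_n\to 0$. We may assume $\liminf_n\Tilde\E^\alpha_{\ve_n}(\chi_n)<\infty$ and pass to a subsequence realizing it. Since $\Tilde W_\ve\leq M$, we have $\Tilde\E^\alpha_{\ve_n}(\chi_n)\geq M^{\alpha-1}\int_\Theta T_n\,d\theta$ after arc-length reparametrization (which leaves both $\Tilde W_\ve$ and the cost unchanged). Hence the stopping times satisfy the uniform bound (\ref{um2}), exactly as in the proof of Theorem~\ref{t:existence}. Proposition~\ref{p:53} then applies and yields $\limsup_n \Tilde W_{\ve_n}(\chi_n(\theta,t),\chi_n)\leq |\chi(\theta,t)|_\chi$ for a.e.\ $(\theta,t)$. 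Since $\alpha-1\leq 0$, we obtain
$$\liminf_{n\to\infty}\bigl[\Tilde W_{\ve_n}(\chi_n(\theta,t),\chi_n)\bigr]^{\alpha-1}{\bf 1}_{[0,T_n(\theta)]}(t)~\geq~|\chi(\theta,t)|_\chi^{\alpha-1}{\bf 1}_{[0,T(\theta)]}(t),$$
where we also use the lower semicontinuity (\ref{liT}) of the stopping times. Fatou's lemma, combined with $|\dot\chi|\leq 1$, then gives (\ref{G1}) word for word as in the chain of inequalities in the proof of Proposition~\ref{p:32}.

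The main obstacle lies in the reduction that feeds ${\bf (\Gamma1)}$. One issue is justifying the arc-length reparametrization of $\chi_n$ while preserving the convergence $\chi_n\to\chi$. We handle this as in \cite{BCM}, by noting that the limit of reparametrized plans is a reparametrization of $\chi$, and $\E^\alpha$ is invariant under reparametrization. The other issue is that Proposition~\ref{p:53} gives a pointwise bound only almost everywhere. This is harmless for Fatou's lemma, but we should check that the exceptional set in $t$ for each fixed $\theta$ has measure zero, so that Fubini can be used.
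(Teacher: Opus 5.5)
Your skeleton is the paper's: $(\Gamma 2)$ from Proposition~\ref{p:61} with $\chi_n=\chi$, and $(\Gamma 1)$ from Proposition~\ref{p:53} followed by the Fatou chain of Proposition~\ref{p:32}. The gap is in the step you add to obtain (\ref{um2}). You reparametrize $\chi_n$ by arc length and assert that the limit of the reparametrized plans is a reparametrization of $\chi$. This is false, because convergence in the sense of Definition~\ref{d:iplan_conv} does not see pieces of a path that are pushed to later and later times. Suppose $\chi_n(\theta,\cdot)$ follows $\chi(\theta,\cdot)$, waits at the endpoint until time $n$, and then makes an excursion of length $2\ell$ back to the same endpoint. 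Then $d(\chi_n(\theta),\chi(\theta))\le \ell/n\to 0$. Yet after arc-length reparametrization every $\chi_n$ becomes the same plan $\hat\chi$, whose paths are those of $\chi$ followed by the excursion, and $\hat\chi$ is not a reparametrization of $\chi$.

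Nor can you fall back on $\E^\alpha(\hat\chi)\ge\E^\alpha(\chi)$: the excursion raises the multiplicity on the arc it visits, and that can lower the cost. Let particles of mass $m$ run $N$ times along a segment $S$ of length $\ell$. Let a second group have its destination at an endpoint of $S$, reached from outside $S$, and let a subset of mass $h$ of that group make the late excursion along $S$ and back. Only the cost on $S$ changes, from $Nm^\alpha\ell$ to $(m+h)^{\alpha-1}(Nm+2h)\ell$, which is strictly smaller for small $h>0$ once $N>2/(1-\alpha)$. Since $\Tilde\E^\alpha_\ve$ is invariant under reparametrization, $\Tilde\E^\alpha_{\ve_n}(\chi_n)=\Tilde\E^\alpha_{\ve_n}(\hat\chi)\to\E^\alpha(\hat\chi)<\E^\alpha(\chi)$ by Proposition~\ref{p:61} applied to $\hat\chi$. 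So for arbitrarily parametrized plans $(\Gamma 1)$ actually fails, and your reduction cannot be repaired.

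The paper's one-line proof also needs (\ref{um2}) to invoke Proposition~\ref{p:53}. That bound is available in the setting of Proposition~\ref{p:32} and Theorem~\ref{t:existence}, where all plans are parameterized by arc length as in (\ref{alp}). State that convention explicitly. Then $T_n(\theta)$ is the length of the $\theta$-path, and your bound $\Tilde\E^\alpha_{\ve_n}(\chi_n)\ge M^{\alpha-1}\int_\Theta T_n\,d\theta$ gives (\ref{um2}) along a subsequence realizing the $\liminf$, without modifying the sequence. The rest of your argument then goes through.

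A minor point: your parenthetical domination argument for $(\Gamma 2)$ is not right as stated. The Case~1 bound $\min\{1,\int_0^{\tau/\ve}J(r)\,dr\}$ degenerates for points near the origin, so it does not give domination by a fixed multiple of $|\chi(\theta,t)|_\chi^{\alpha-1}$. Since you quote Proposition~\ref{p:61} anyway, drop it.
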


\section{Numerical simulations of optimal irrigation patterns}
\label{sec:7}
\setcounter{equation}{0}

Our first simulations illustrate the effect of the mollification parameter $\ve$ on the local
minimizer of the irrigation problem.

In Fig.~\ref{f:25-0,4} we consider the irrigation of 25 equal masses uniformly distributed along 
a half-circumference. The exponent in the irrigation cost (\ref{TCg}) is here $\alpha=0.4$.
Minimizers of the mollified cost functional (\ref{mic1}) are obtained by a local gradient descent algorithm, taking $\ve =  0.25$, $\ve=0.1$ and $\ve= 0.05$, respectively.    As the mollification parameter $\ve$ decreases, it is observed that the branches merge together, approaching the limit configuration shown in the right-most figure.   

Similarly, 
in Fig.~\ref{f:29-0,9} we consider the irrigation of 29 equal masses uniformly distributed along 
a half-circumference. The exponent in the irrigation cost (\ref{TCg}) is here $\alpha=0.9$.
Minimizers of the mollified cost functional (\ref{mic1}) are obtained by a local gradient descent algorithm, taking $\ve =  0.05$, $\ve=0.025$ and $\ve= 0.01$, respectively.    Again, as $\ve\to 0+$, the branches merge together, approaching the  limit configuration shown in the right-most figure.

This behavior can be understood by observing that, in a ramified transport, if $\alpha<1$
then the transportation cost gets discounted when water particles move along the same path.   In connection with the mollified functional (\ref{TCep}),   one still gets a discount 
as long as water particles remain a distance $<\ve$ from each other.     As $\ve\to 0$, the mollified irrigation cost converges to the original one, as stated in Theorem~\ref{t:53}.

\begin{figure}
\centerline{\hbox{\includegraphics[width=4cm]{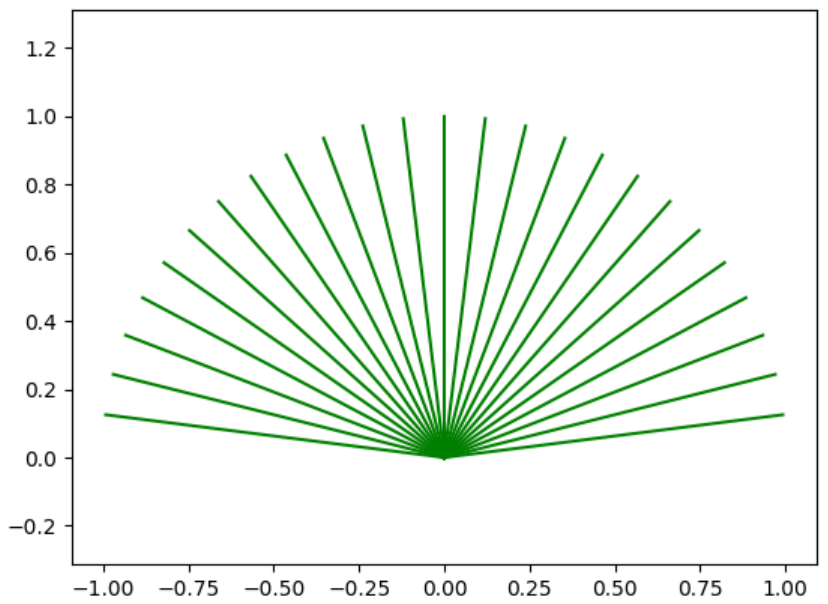}}
\hbox{\includegraphics[width=4cm]{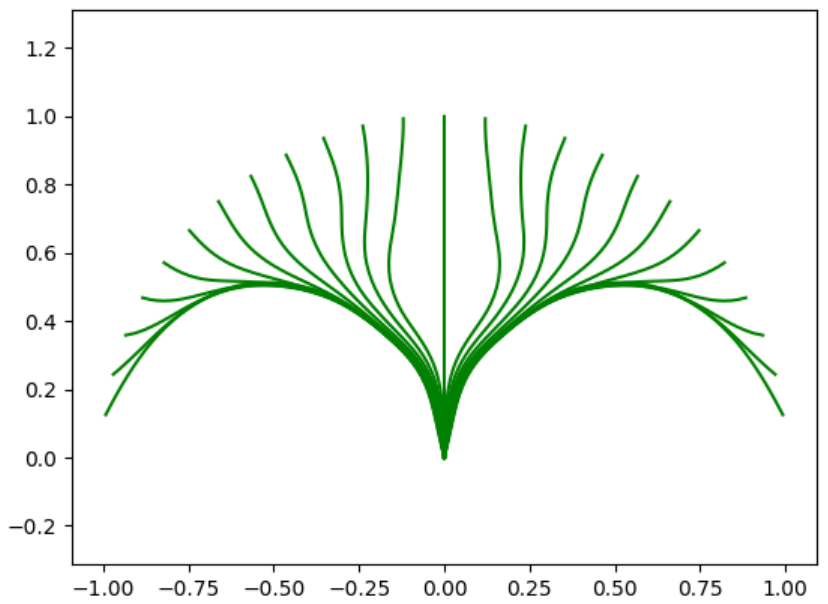}}
\hbox{\includegraphics[width=4cm]{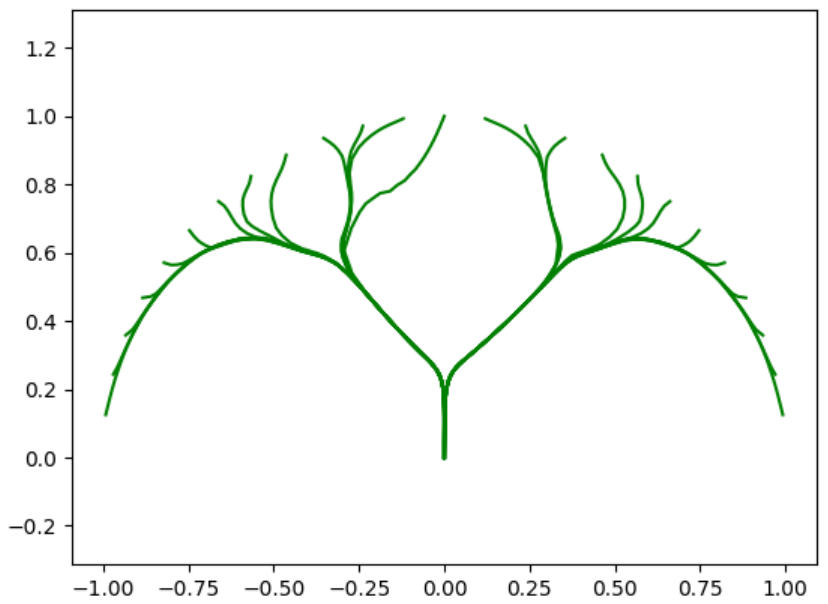}}
\hbox{\includegraphics[width=4cm]{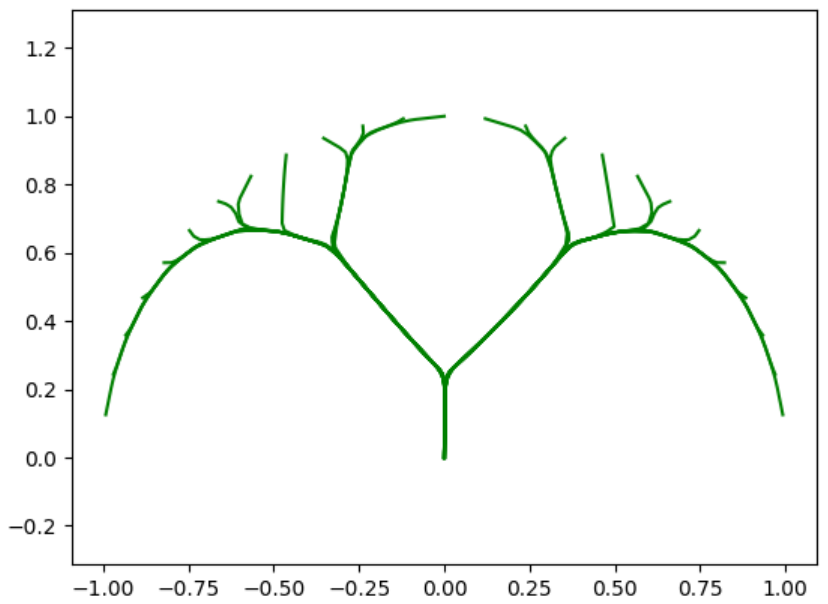}}
}
\caption{\small Different stages of the minimization of (\ref{mic1}), applied to the irrigation of 25 equal masses 
located along an arc of circumference.  Here $\alpha= 0.4$.    The left image represents the initial configuration. The remaining three figures represent the local minimizers obtained by 
gradient descent, where the parameter in the mollifier takes the values
$\ve =  0.25, 0.1, 0.05$, respectively.
}
\label{f:25-0,4}
\end{figure}

\begin{figure}
\centerline{\hbox{\includegraphics[width=4cm]{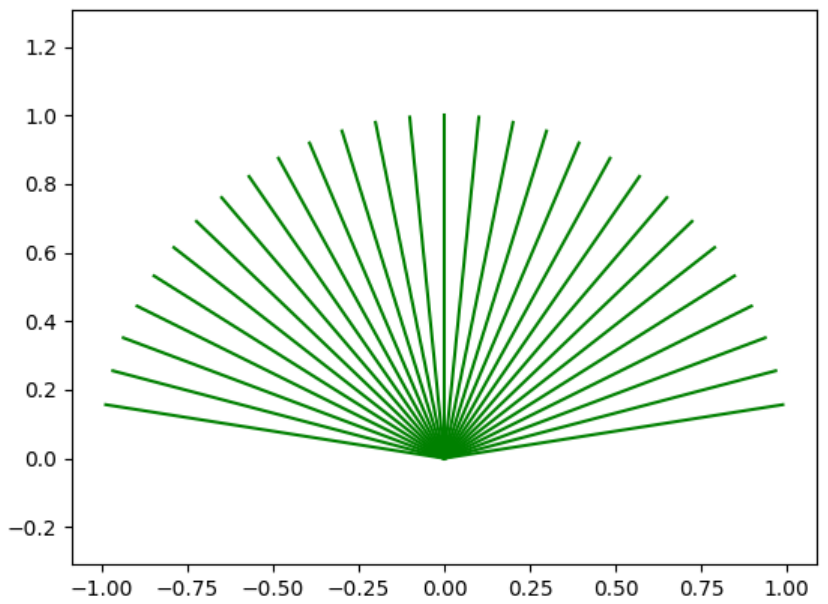}}
\hbox{\includegraphics[width=4cm]{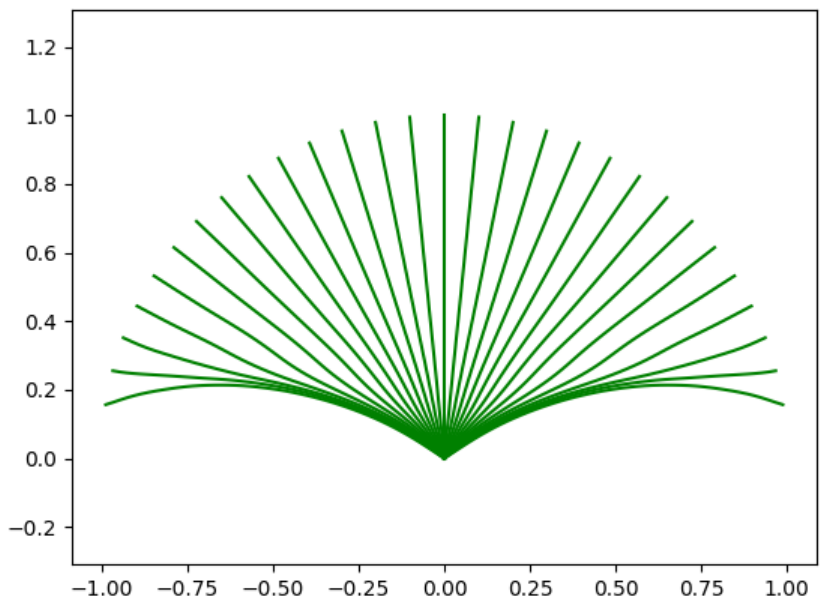}}
\hbox{\includegraphics[width=4cm]{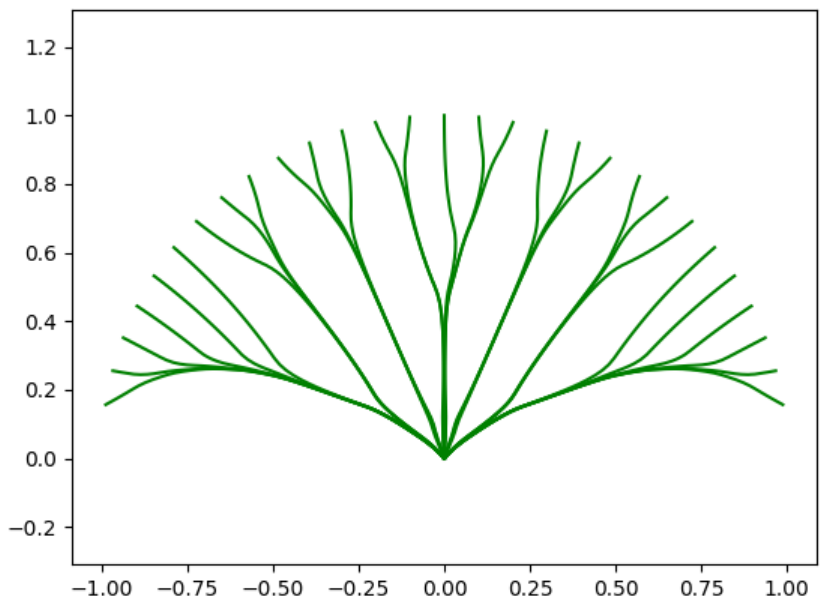}}
\hbox{\includegraphics[width=4cm]{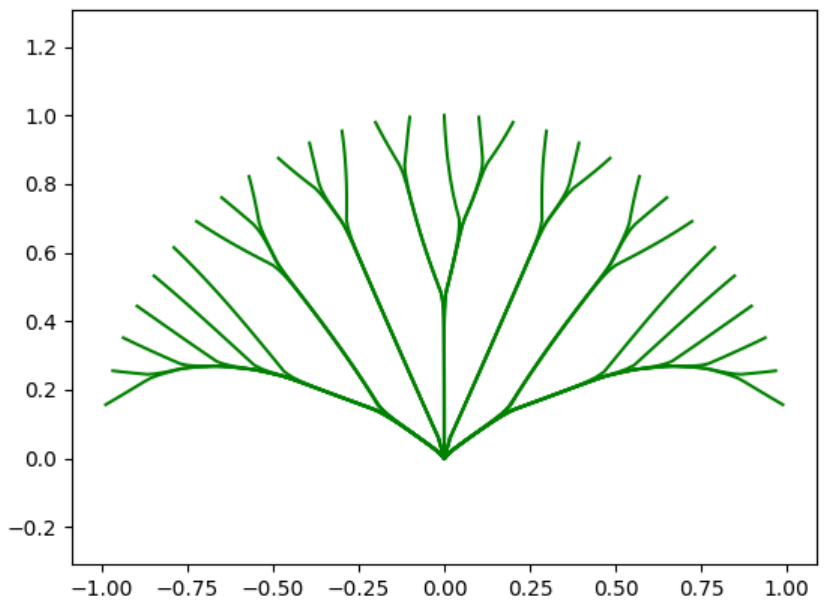}}
}
\caption{\small  Different stages of the minimization of (\ref{mic1}),  applied to the irrigation of 29 equal masses 
located along an arc of circumference.  Here $\alpha= 0.9$.   The left image represents the initial configuration. The remaining three figures represent the local minimizers obtained by 
gradient descent, taking $\ve= 0.1, 0.05, 0.01$, respectively.
  }
\label{f:29-0,9}
\end{figure}

\section{Optimal shapes of tree roots and branches}
\label{sec:6}
\setcounter{equation}{0}
The models of tree roots or tree branches in \cite{BSun}  are formulated in terms of a measure $\mu$
describing the density of root hair or leaves, respectively. 
This has to be optimized subject to an irrigation cost.  More precisely, in case of tree roots, 
the density $u(x)$ of water and nutrients in the soil at a point $x$ is determined 
as the solution to the elliptic equation
\bel{elleq}\Delta u+ f(u) - u\,\mu~=~0,\eeq
while 
\bel{Hdef}\H(\mu)~\doteq~\int u(x)\, d\mu(x)\eeq
is the total amount of nutrients harvested by the roots. 
Given a constant $c>0$, 
one seeks a measure $\mu$ which minimizes the combined functional
\bel{mumin}
\I^\alpha(\mu) - c\H(\mu).\eeq
Roughly speaking, to maximize the harvest functional $\H(\mu)$, the measure $\mu$ should 
not only have a large total mass, but also be spread out far and wide.   Otherwise the corresponding 
density $u(x)$ in (\ref{elleq})  would be small.
On the other hand, this requires a larger irrigation cost.   
The optimal shape provides a balance between 
these two conflicting goals.

Similar considerations apply to the functional $\S(\mu)$ in \cite{BSun},
describing how much sunlight is captured by a distribution
$\mu$ of leaves. This will be large if the leaves are spread out over a large surface.
However, this in turn increases the ramified transport cost.

In the present paper, to simplify the numerical simulations, 
we consider a payoff functional of the form
\bel{Hpen}c_2\H(\mu)- c_1\P(\mu)~\doteq~c_2 \mu(\R^2) -c_1\dint K(x,y) \,d\mu(x)\,d\mu(y),\eeq
Here $\H(\mu)$ is a payoff, measured simply by the total mass of the measure $\mu$.
This accounts for the total amount of leaves, or the total amount of root hair, dependign on the model.
On the other hand, the integral term $\P(\mu)$ penalizes configurations where a large amount of mass 
is concentrated on a small region.   In particular, one can choose 
a kernel $K:\R^2\mapsto\R$ of the form
\bel{K1}
 K(x,y)\,=\, \exp\bigl\{ -\beta\, |x-y|^2\bigr\}.\eeq
 or 
\bel{K2}
K(x,y)\,=\, |x-y|^{-\gamma},\eeq
 with $0<\gamma<1$.
 This leads to the optimization problem:
 \begi
 \item[{\bf (OPT)}]  {\it Given $c_1, c_2>0$, find a positive Radon measure $\mu$ on $\R^2$ that minimizes
  the %combined
   functional}
 \bel{Jdef}\J(\mu)~=~\I^\alpha(\mu) + c_1 \P(\mu) - c_2 \H(\mu).\eeq
 \endi
 The existence of a solution
 can be proved by the same arguments developed in \cite{BPS, BSun}, based  on lower semicontinuity 
 together with  a priori 
 bounds on the support of the measure $\mu$.
 
\section{Numerical simulations of optimal tree shapes}\label{sec:8}
\setcounter{equation}{0}

In the numerical tests in this section we seek the optimal location of $n$ tree branches, and the density of leaves on each one of them. The unknowns are 
piece-wise linear function $x_k(s)$, $y_k(s)$, and a piece-wise constant leaf density $m_k(s)$ for $k = 1, 2,\ldots, n$.
We aim to find the optimal values of these functions by minimizing an approximation to the cost functional 
\begin{equation}\label{cost}
\J(x_k, y_k, m_k)=\I^\alpha_\ve(x_k, y_k, m_k) + c_1 \P(x_k, y_k, m_k) - c_2 \H(x_k, y_k, m_k). 
\end{equation}
Here, $\I^\alpha_\ve$ is the mollified irrigation cost defined in (\ref{mic1}), while $\P$ and $\H$ are the payoff and penalty functionals defined in (\ref{Hpen}). The optimization is performed over the set of mappings
\begin{equation}\label{mapping}
  \begin{aligned}
& s\mapsto \bigl(x_k(s), y_k(s), m_k(s)\bigr),\quad
s \in [0, 1],\qquad \mbox{subject to the constraints}\\ 
& y_k(s) \ge 0, \quad m_k(s) \ge 0, \quad x_k(0) = y_k(0) = 0.
  \end{aligned}
\end{equation}
Here, $\bigl(x_k(s), y_k(s)\bigr)$ denotes a point  $k$-th branch, while $m_k(s)$ is the density of leaves on the $k$-th branch per unit length. 
To approximate the solution, we consider  continuous and piecewise linear $x_k(\cdot)$, and $y_k(\cdot)$ with break-points $\{t_p\}_{p=0}^{N}\subset[0,1]$, while $m_k(\cdot)$ is piecewise constant function with the same break-points. 
% In our calculations, we shall also need $l_{kp} h_p$ and $m_{kp}\ell_{kp}h_p$. The former is the Euclidean distance between the point $(x_{kp},y_{kp})$ and
% the point $(x_{k,p-1},y_{k,p-1})$.
% \begin{equation}\label{dkp}
% d_{kp}=
% \sqrt{
%     \left(x_{kp}-x_{k,{p-1}}\right)^2
%     +\left(y_{kp}-y_{k,{p-1}}\right)^2
%     }, \qquad\quad  m_d\doteq m\odot d.
% \end{equation}

As a mollifier we choose 
$J(r)~=~\max\{0,\, 1- r^2\}$, and for $s,s'\in [0,1]$, we set
\[R(s,s')\,\doteq\,\sqrt{(x_j(s')-x(s))^2+(y_j(s')-y(s))^2}\,.\]
Then the mollified flux is given by
\begin{equation}\label{Fep}
  \begin{aligned}
  F_\varepsilon(x(s),y(s))&=\sum_{j=1}^n \int_0^1 \frac{1}{\varepsilon}\,
J\!\left(\frac{R(s,s')}{\varepsilon}\right)\, f_j(s')\, \ell_j(s')\,ds'\\
&=\sum_{j=1}^n \int_0^1 \frac{1}{\varepsilon}\,
\max\!\left(0,\;1-\frac{R(s,s')^2}{\varepsilon^2}\right)\, f_j(s')\, \ell_j(s')\,ds'.
\end{aligned}
\end{equation}
Here, $\ell_k$ is the arc-length along the $k$-th branch
and $f_k$ is the flux of water and nutrients from the root to various points along the $k$-th branch. 
% \begin{equation}\label{flux} f_k(s)~=~\int_s^1 m_k(\sigma)\,  d\sigma.\end{equation}
Recall the functionals in the definition of $\J$: the {\bf payoff} functional (the total mass of leaves) given by
\(\mathcal{H}=\sum_{k=1}^n f_k(0)\); the {\bf penalization} for cramping too many leaves in the same place is
\begin{equation*}\mathcal{P}~=~\sum_{j,k=1}^n \int_0^1\left(\int_0^1 \exp\Big\{ - \bigl|x_j(s)-x_k(s')\bigr|^2 - \bigl|y_j(s)-y_k(s') \bigr|^2\Big\}\, m_j(s)\, ds\right) \,m_k(s') ds',\end{equation*}
and the {\bf mollified irrigation cost} with discount $\alpha\in \,]0,1[\,$ is 
\begin{equation}\label{iaep}
\mathcal{I}^\alpha_\epsilon~=~\sum_{k=1}^n \int_0^1 \Big[ F_\varepsilon\bigl(x_k(s), y_k(s)\bigr)]^{\alpha-1} f_k(s)\,\ell_k(s)\, ds.
\end{equation}
We use the midpoint rule to compute the integral in the above definition of $\mathcal{I}^\alpha_\epsilon$. For the specific mollifier we have chosen, since 
$x_j(s')$, $y_j(s')$ are piece-wise linear, all integrals
arising in the definition of the total flux $F_\varepsilon(x(s),y(s))$ in equation~\eqref{Fep} can be evaluated exactly at the midpoint. More precisely, on the $j$-th branch and $p$-th interval, for $s_j=\frac{t_{j,p}+t_{j,p-1}}{2}$ the integral w.r.t.~$s'$ in~\eqref{Fep} is evaluated exactly. We then have the following gradient descent algorithm with re-discretization to compute a local minimizer of $\J$.\v
%\begin{algorithm}[H]
%\caption{Gradient descent with
%re-discretization}\label{alg:gd}
%\begin{algorithmic}[1]
\begin{tabular}{|| l ||} \hline
{\bf Algorithm 1.} Gradient descent with re-discretization\\
\hline
{\bf Require:}  Constants $\alpha\in(0,1)$, $c_1,c_2>0$,
  $\varepsilon>0$; initial step size $\tau>0$; max iterations
  $J_{\max}$.\\
{\bf Ensure:}  $(x^*,y^*,m^*)$ approximately
  minimizing~\eqref{cost} with $m_{kp}\ge 0$,
  $y_{kp}\ge 0$.\\
1. Initialise branches as straight lines from the origin;
  set $m_{kp}=m_{\mathrm{init}}$\\
2. {\bf for}  $j=0,1,\ldots,J_{\max}$ ~{\bf do}\\
 3.  $g\leftarrow$
    {\scshape ComputeGradient} $(x,y,m,\alpha,\varepsilon,c_1,c_2)$\\
    % \Comment{Algorithm~\ref{alg:grad}}
 4. Backtracking line search: find the largest
    $\tau_j\le\tau$ such that
    $\mathcal{F}(w-\tau_j\,g)<\mathcal{F}(w)$\\
 5. $w\gets w - \tau_j\,g$; \;
    $y_{kp}\gets\max(y_{kp},0)$, \;
    $m_{kp}\gets\max(m_{kp},0)$\\
  6. Re-discretize: redistribute the $N$
    knots at equal arc-length spacing by linear
    interpolation\\
    7. {\bf end for}\\ 
8. {\bf return} $(x^*,y^*,m^*)$\\
\hline
\end{tabular}
\v 
Figures~\ref{f:opt14} and~\ref{f:optim12} show two  simulations, with $n=11$ and $n=15$ branches, respectively.  As the mollification parameter $\ve$ becomes smaller, it is observed that in an optimal configuration the branches
stick together for a longer time.

\begin{figure}[ht]
\centerline{\includegraphics[width=3.3cm]{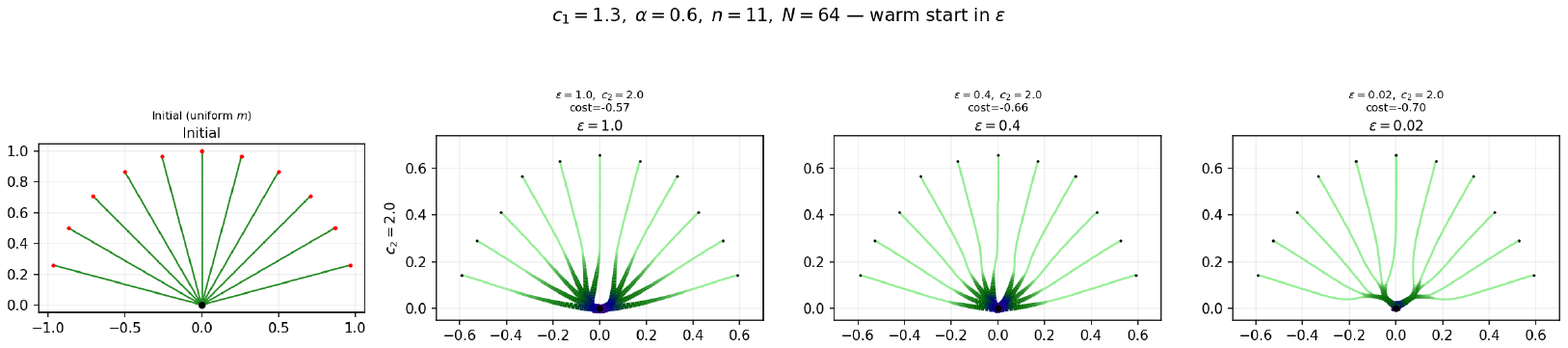}\quad \includegraphics[width=13cm]
{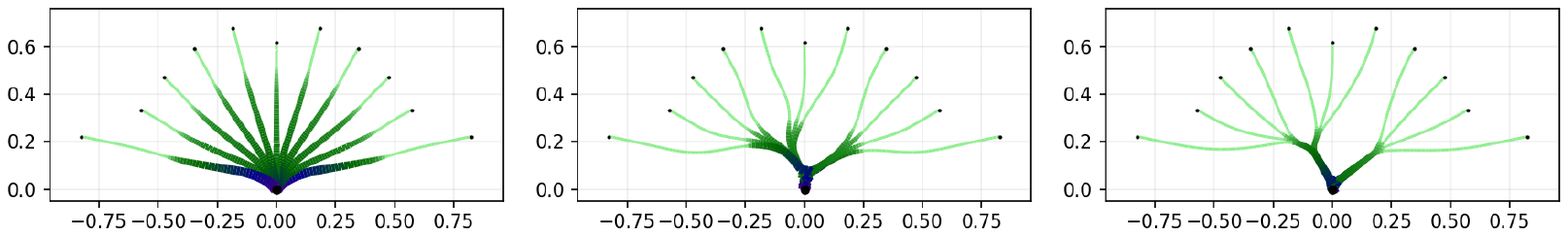}}
\caption{\small A simulation with 11 branches.  
Here $\alpha=0.4$, $c_1=0.4$, $c_2 = 1.4$.
The left figure shows the initial configuration.
The other three figures show different stages of the minimization algorithm, where
 the mollification parameter takes the values $\ve=0.5$, $\ve= 0.1$, $\ve=0.03$.
}
\label{f:opt14}
\end{figure}

\begin{figure}[ht]
\centerline{\hbox{\includegraphics[width=3.3cm]{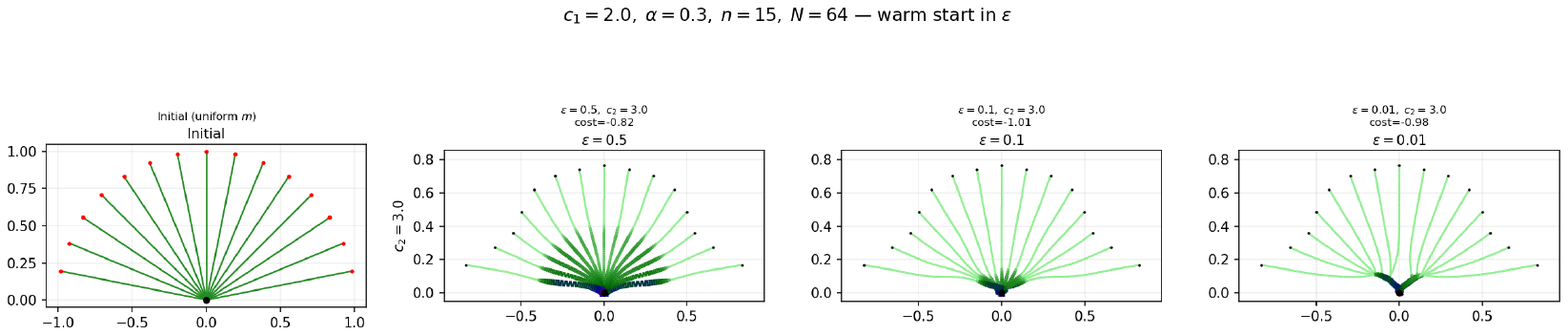}}
\hbox{\includegraphics[width=13cm]{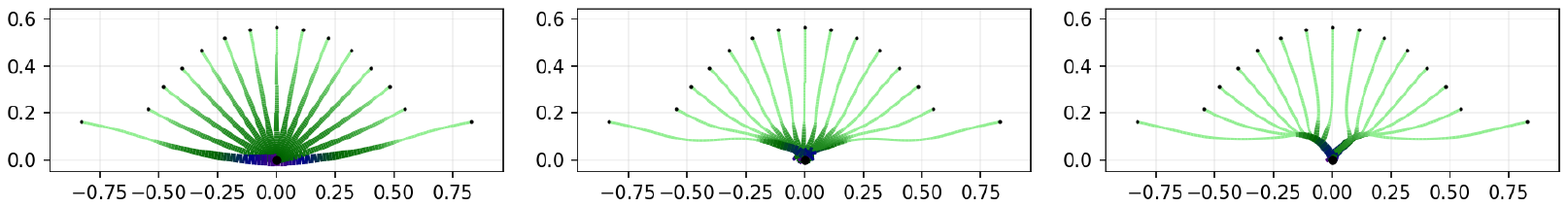}}}
\caption{\small A simulation with 15 branches. Here $\alpha=0.5$, $c_1=0.5$, $c_2 = 1.5$. The left figure shows the initial configuration.
The other three figures show different stages of the minimization algorithm, where
the mollification parameter takes the values $\ve=0.8$, $\ve= 0.1$, $\ve=0.01$.
}
\label{f:optim12}
\end{figure}

\end{document}